\numberwithin{equation}{section}
\newtheorem{thm}{Theorem}
\newtheorem{corollary}[thm]{Corollary}
\newtheorem{lemma}[thm]{Lemma}
\newtheorem{remark}[thm]{Remark}
\def\squarebox#1{\hbox to #1{\hfill\vbox to #1{\vfill}}}
\newcommand{\be}{\begin{equation}}
\newcommand{\ee}{\end{equation}}
\newcommand{\bea}{\begin{eqnarray}}
\newcommand{\eea}{\end{eqnarray}}
\newcommand{\bd}{\begin{displaymath}}
\newcommand{\ed}{\end{displaymath}}
\begin{document}
\begin{CJK*}{GBK}{song}
\title[Generalized $q$-difference equations for general $q$-polynomials]{Generalized $q$-difference equations for general $q$-polynomials with double $q$-binomial coefficients}
\author{ Jian Cao${}^{1}$, Sama Arjika${}^{2,3}$  and Mahouton Norbert Hounkonnou${}^{3}$}
\dedicatory{\textsc}
\thanks{${}^1$School of Mathematics, Hangzhou Normal University, Hangzhou City, Zhejiang Province, 311121, China.  ${}^2$Department of Mathematics and Informatics, University of Agadez, Niger. ${}^3$International Chair in Mathematical Physics and Applications (ICMPA-UNESCO Chair),
University of Abomey-Calavi,  072 BP 50,  , Cotonou, Benin Republic. }
\thanks{Email:21caojian@hznu.edu.cn, 21caojian@163.com, rjksama2008@gmail.com, norbert.hounkonnou@cipma.uac.bj.}

\keywords{  Basic  (or $q$-) hypergeometric series;  Homogeneous $q$-difference operator; Double $q$-binomial coefficients; Cigler polynomials; Generating functions; Rogers type formulas; Srivastava-Agarwal   type bilinear generating functions.}

\thanks{2010 \textit{Mathematics Subject Classification}.Primary: 05A30, 33D15, 33D45; Secondary: 05A40, 11B65.}

\begin{abstract}
In this  paper, we use the generalized $q$-polynomials with double $q$-binomial coefficients and homogeneous $q$-operators [J.  Difference  Equ. Appl. {\bf20} (2014), 837--851.] to construct $q$-difference equations with seven variables, which generalize recent works of Jia {\it et al} [Symmetry {\bf 2021}, 13, 1222.]. In addition, we derive Rogers formulas, extended Rogers formulas and Srivastava--Agarwal type bilinear generating functions  for generalized  $q$-polynomials, which generalize generating functions for Cigler's polynomials [J. Difference Equ. Appl. {\bf 24} (2018),   479--502.]. Finally,  we also derive  mixed generating functions using $q$-difference equations.
\end{abstract}

\maketitle

\section{\bf Introduction}
In this paper, we adopt the common   notation and terminology for basic hypergeometric  series as in Refs.  \cite{GasparRahman,Koekock}.    Throughout this paper, we assume that  $q$ is a fixed nonzero real or complex number and $|q|< 1$.  The $q$-shifted factorial  and its compact  factorial  are defined \cite{GasparRahman,Koekock}, respectively by:
\begin{equation}
(a;q)_0:=1,\quad  (a;q)_{n} \textcolor{red}{:}=\prod_{k=0}^{n-1} (1-aq^k),   \; (a;q)_{\infty}:=\prod_{k=0}^{\infty}(1-aq^{k})
\end{equation}
and
 $ (a_1,a_2, \ldots, a_r;q)_n=(a_1;q)_n (a_2;q)_n\cdots(a_r;q)_n,\; n\in\{0, 1, 2\cdots\}$. \\
 We will use frequently  the following relation
 \begin{equation}
 \label{usu}
(aq^{-n};q)_n=(q/a;q)_n(-a)^nq^{-n-({}^n_2)}.
\end{equation}
  The generalized $q$-binomial coefficient is defined as \cite{GasparRahman}
\be
\label{qb}
 {\,\alpha\,\atopwithdelims []\,k\,}_{q}=\frac{(q^{-\alpha};q)_k}{(q;q)_k}(-1)^kq^{\alpha k-({}^k_2)}, \quad\,\alpha \in\mathbb{C}.
\ee
Similarly, by replacing $q$ by $-q$ in the above relation, we get the following:
\be
\label{qb1}
  {\,\alpha\,\atopwithdelims []\,k\,}_{-q}=\frac{(-q^{-\alpha};q)_k}{(-q;q)_k} q^{\alpha k-({}^k_2)},\quad\,\alpha \in\mathbb{C}.
\ee
\par
The   basic or $q$-hypergeometric function    in the variable $z$ (see  Slater \cite[Chap. 3]{SLATER},  Srivastava and Karlsson   \cite[p. 347, Eq. (272)]{SrivastaKarlsson}   for details) is defined as:
 $$
{}_{r}\Phi_s\left[\begin{array}{r}a_1, a_2,\ldots, a_r;
 \\\\
b_1,b_2,\ldots,b_s;
 \end{array}
q;z\right]
 =\sum_{n=0}^\infty\Big[(-1)^n q^{({}^n_2)}\Big]^{1+s-r}\,\frac{(a_1, a_2,\ldots, a_r;q)_n}{(b_1,b_2,\ldots,b_s;q)_n}\frac{ z^n}{(q;q)_n},
$$
 when $r>s+1$. Note that, for $r=s+1$, we have:
$$
{}_{r+1}\Phi_r\left[\begin{array}{r}a_1, a_2,\ldots, a_{r+1};
 \\\\
b_1,b_2,\ldots,b_r;
 \end{array} q; z\right]
 =\sum_{n=0}^\infty \frac{(a_1, a_2,\ldots, a_{r+1};q)_n}{(b_1,b_2,\ldots,b_r;q)_n}\frac{ z^n}{(q;q)_n}.
$$
 \par
We remark in passing that, in a recently-published
survey-cum-expository review article, the so-called $(p,q)$-calculus
was exposed to be a rather trivial and inconsequential variation of
the classical $q$-calculus, the additional parameter $p$ being redundant
or superfluous (see, for details, \cite[p. 340]{HMS-ISTT2020}).\par
  Chen {\it et al.} \cite{Chen2003} introduced  homogeneous $q$-difference operator   $D_{xy}$
  as
\be
\label{deffd}
  D_{xy}\big\{f(x,y)\}:=\frac{f(x,q^{-1}y)-f( qx, y)}{x-q^{-1}y},
\ee
which  turn out to be suitable for dealing with the Cauchy polynomials.

Wang and Cao \cite{CAO2018} presented two extensions of Cigler's (see \cite{SA2021})
polynomials
\be
\label{def02}
\mathcal{C}_n^{(\alpha-n)}(x,y,b)=
\sum_{k=0}^n (-1)^kq^{({}^k_2)} { \alpha\,\atopwithdelims []\,k\,}_{q}b^k \frac{(q;q)_n}{(q;q)_{n-k}}   p_{n-k}(x,y)
\ee
and
\be
\label{def03}
\mathcal{D}_n^{(\alpha-n)}(x,y,b)=
\sum_{k=0}^nq^{({}^k_2)}  { \alpha\,\atopwithdelims []\, k\,}_{q}b^k\frac{(q;q)_n}{(q;q)_{n-k}}\left[(-1)^{n+k} q^{-({}^n_2)}  p_{n-k}(y,x)\right],
\ee
where $\displaystyle p_n(x,y):=(x-y)( x- qy)\cdots ( x-q^{n-1}y) =(y/x;q)_n\,x^n$ are  the Cauchy polynomials.

Recently, Jia {\it et al} \cite{ZJia-BKhan2021}  introduced the following
\be
\label{dref03}
L_{\tilde{m},\tilde{n}} (\alpha,x,z,a)=
\sum_{k=0}^n   { n\,\atopwithdelims []\, k\,}_{q} { \alpha\,\atopwithdelims []\, k\,}_{-q}   q^{\tau(\tilde{m},\tilde{n})+({}^k_2)} (a;q)_k z^kx^{n-k},
\ee
with
\be
\tau(\tilde{m},\tilde{n})=\tilde{m}({}^k_2)-\tilde{n}({}^{k+1}_{\;\,\;2}),\label{tau}
\ee
 where  $\tilde{m}$ and $\tilde{n}$ are real numbers.
\par

Our present investigation is essentially motivated  by
the earlier works by Jia {\it et al} \cite{ZJia-BKhan2021}. Our aim here is to introduce and study some further extensions
of the above-mentioned $q$-polynomials
\be
\label{dedref03}
\tilde{L}_n^{(\tilde{r},\tilde{s})} (\alpha,x,y,z,a)=
\sum_{k=0}^n   { n\,\atopwithdelims []\, k\,}_{q} { \alpha\,\atopwithdelims []\, k\,}_{-q}   q^{\tau(\tilde{r},\tilde{s})+({}^k_2)} (a;q)_k p_{n-k}(x,y)z^k,
\ee
where $\tau(\tilde{r},\tilde{s})$ is defined as in (\ref{tau}).
 
The main task in this paper is to show how  the $q$-polynomials $(\ref{dedref03})$ are related to  other known polynomials.  For example, the generalized $q$-polynomials
$\tilde{L}_n^{(\tilde{r},\tilde{s})} (\alpha,x,y,z,a)$ defined
in $(\ref{dedref03})$  are   generalized  and unified forms of
 Hahn polynomials and  Al-Salam-Carlitz polynomials. Particular cases of known results in the literature are given in Remark \ref{remark1}.  
\begin{remark}
\label{remark1}
$\;$
\begin{enumerate}
\item Upon setting $y=0$,
the generalized $q$-polynomials
$\tilde{L}_n^{(\tilde{r},\tilde{s})} (\alpha,x,y,z,a)$ defined in  $(\ref{dedref03})$
 reduce to (\ref{dref03})  (see \cite{ZJia-BKhan2021}):
\be
\tilde{L}_n^{(\tilde{r},\tilde{s})} (\alpha,x,0,z,a)
= L_{\tilde{r},\tilde{s}} (\alpha,x,z,a).
\ee
\item For $(\alpha,\tilde{r},\tilde{s},x,y,z,a)=(\infty,0,0, y,x,-z,-q)$, the generalized
$q$-polynomials $\tilde{L}_n^{(\tilde{r},\tilde{s})} (\alpha,x,y,z,a)$  reduce to trivariate $q$-polynomials $F_n(x,y,z;q)$ \cite{MAA2016}
\be
 \tilde{L}_n^{(0,0)} (\infty,y,x,-z,-q)
=(-1)^n q^{({}^n_2)}F_n(x,y,z;q).
\ee
\item Upon setting $\alpha=n\in\mathbb{Z}$ and $(\tilde{r},\tilde{s}, a, x,y,z)=(0, -1, -yq, 1,0,x)$,
the  $q$-polynomials
$\tilde{L}_n^{(\tilde{r},\tilde{s})} (\alpha,x,y,z,a)$ 
  reduce to    $\rho_e(n,y,x,q)$ (see \cite{ZJia-BKhan2021}):
\be
\tilde{L}_n^{(0,-1)} (n,1,0,x,-qy)
=\rho_e(n,y,x,q).
\ee
\item  Upon setting  $(\alpha,\tilde{r},\tilde{s},y,a)=(\infty,-1,0,  1, -q)$, the generalized
$q$-polynomials $\tilde{L}_n^{(\tilde{r},\tilde{s})} (\alpha,x,y,z,a)$  reduce to the  homogeneous Rogers-Szeg\"o polynomials
$h_n(x,y|q)$ (see \cite{LHS-AAS2013}):
\be
\tilde{L}_n^{(-1,0)} (\infty,x,y,1,-q)
=h_n(x,y|q).
\ee
\item By choosing  $(\alpha,\tilde{r},\tilde{s}, a, x,y)=(\infty,-1, 0, -q, xq^{-n},0)$, the generalized
$q$-polynomials $\tilde{L}_n^{(\tilde{r},\tilde{s})} (\alpha,x,y,z,a)$  reduce to the  Rogers-Szeg\"o polynomials
$g_n(z,x|q)$ (see \cite{AlSalam}):

\be
\tilde{L}_n^{(-1,0)} (\infty,xq^{-n},0,z,-q)
=g_n(z,x|q).
\ee
\end{enumerate}
\end{remark}

 The rest of paper is organized as follows. In section \ref{section2}, we deduce  the main results of $q$-difference equations with seven-variable for generalized $q$-polynomials. In section \ref{section3}, we obtain the generating function of generalized $q$-polynomials by the method of $q$-difference equations. In section \ref{section4}, we derive Rogers formula for generalized $q$-polynomials by using the $q$-difference equations. In section \ref{section5}, we gain a mixed generating function for generalized $q$-polynomials by $q$-difference equations.

\section{Main results}\label{section2}
In this section, we give the following  fundamental  theorem.
\begin{thm}\label{thm1}
Let $f(\alpha,x,y,a,z,\tilde{r},\tilde{s})$ be a seven-variable analytic function in a neighborhood of $(\alpha,x,y,a,z,\tilde{r},\tilde{s})=(0,0,0,0,0,0,0)\in\mathbb{C}^7.$ Then, $f(\alpha,x,y,a,z,\tilde{r},\tilde{s})$ can be expanded in terms of  $\tilde{L}_n^{(\tilde{r},\tilde{s})} (\alpha,x,y,z,a)$  if and only if $f$ satisfies the following $q$-difference equation:
\begin{align}\label{thm1_1}
 & (x-q^{-1}y)\Big\{f(\alpha,x,y,a,z,\tilde{r},\tilde{s})-f(\alpha,x,y,a,q^2z,\tilde{r},\tilde{s})\Big\} \nonumber\\
 &\quad\qquad\qquad\qquad\qquad =q^{\alpha-\tilde{s}}z\Big\{f(\alpha,qx,y,a,zq^{ \tilde{r}-\tilde{s}},\tilde{r},\tilde{s})-f(\alpha,x,q^{-1}y,a,zq^{ \tilde{r}-\tilde{s}},\tilde{r},\tilde{s})\Big\}\nonumber \\
&\quad\qquad\qquad\qquad\qquad  +q^{-\tilde{s}}(1-aq^{\alpha })\Big\{f(\alpha,qx,y,a,zq^{1+\tilde{r}-\tilde{s}},\tilde{r},\tilde{s})-f(\alpha,x,yq^{-1},a,zq^{1+\tilde{r}-\tilde{s}},\tilde{r},\tilde{s})\Big\} \nonumber\\
&\quad\qquad\qquad\qquad\qquad   -azq^{-\tilde{s}}\Big\{f(\alpha,qx,y,a,zq^{2+\tilde{r}-\tilde{s}},\tilde{r},\tilde{s})-f(\alpha,x,yq^{-1},a,zq^{2+\tilde{r}-\tilde{s}},\tilde{r},\tilde{s})\Big\}.
\end{align}
\end{thm}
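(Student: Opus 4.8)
The plan is to rewrite the functional equation (\ref{thm1_1}) in terms of the homogeneous $q$-difference operator $D_{xy}$ of (\ref{deffd}) and then to treat the two implications separately, exploiting the explicit series (\ref{dedref03}) for the ``only if'' part and the analyticity hypothesis for the ``if'' part. The first step is to record how $D_{xy}$ acts on the Cauchy polynomials: from $p_m(x,q^{-1}y)=(x-q^{-1}y)\,p_{m-1}(x,y)$ and $p_m(qx,y)=q^m(x-q^{-1}y)\,p_{m-1}(x,y)$ one obtains $D_{xy}\{p_m(x,y)\}=(1-q^m)\,p_{m-1}(x,y)$, and, more generally, for each shifted argument occurring on the right of (\ref{thm1_1}) the inner difference $f(\alpha,qx,y,\ldots)-f(\alpha,x,q^{-1}y,\ldots)$ equals $-(x-q^{-1}y)\,D_{xy}\{f\}$ acting in the $(x,y)$ slots. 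Dividing (\ref{thm1_1}) by the common factor $x-q^{-1}y$ then recasts it as an operator equation whose right-hand side is a linear combination of terms of the form $z\,D_{xy}\{f(\ldots,zq^{e_i},\ldots)\}$ with $e_i\in\{\tilde{r}-\tilde{s},\,1+\tilde{r}-\tilde{s},\,2+\tilde{r}-\tilde{s}\}$; in this form each summand lowers the Cauchy-degree by one while restoring a power of $z$, which is exactly the balance that drives the coefficient matching below.

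For necessity I would invoke linearity: it suffices to verify that each basis polynomial $\tilde{L}_n^{(\tilde{r},\tilde{s})}(\alpha,x,y,z,a)$ solves this operator equation, after which any convergent combination $\sum_n\mu_n\tilde{L}_n^{(\tilde{r},\tilde{s})}$ does too. Writing $\tilde{L}_n^{(\tilde{r},\tilde{s})}=\sum_{k=0}^n C_{n,k}\,p_{n-k}(x,y)z^k$ with $C_{n,k}$ the coefficient read off from (\ref{dedref03}), applying $D_{xy}\{p_{n-k}\}=(1-q^{n-k})p_{n-k-1}$ term by term and reindexing $k\mapsto k-1$, the whole identity collapses to a single two-term recurrence relating $C_{n,k}$ and $C_{n,k-1}$. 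Verifying it is then a finite computation: one forms the ratio $C_{n,k}/C_{n,k-1}$ directly from (\ref{qb}), (\ref{qb1}) and the definition of $(a;q)_k$, and checks that it coincides with the ratio prescribed by the three shift-exponents $e_i$; the factors $(1-aq^{k-1})$ and $q^\alpha+q^{k-1}$ that appear are precisely those generated by $(a;q)_k/(a;q)_{k-1}$ and by the ratio of the $-q$-binomial coefficients in (\ref{qb1}).

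For sufficiency I would use analyticity to expand $f$ as a power series in $z$, say $f=\sum_{k\ge 0}A_k\,z^k$ with each $A_k=A_k(\alpha,x,y,a,\tilde{r},\tilde{s})$ analytic near the origin, and substitute into the operator form. Matching coefficients of $z^k$ yields a one-step recurrence of the shape $(1-q^{2k})A_k=\kappa_k\,D_{xy}\{A_{k-1}\}$ with $\kappa_k$ an explicit constant in $k,\alpha,a,\tilde{r},\tilde{s}$; crucially there is no self-coupling, so every $A_k$ is determined by the single seed $A_0=f|_{z=0}$ via $A_k=\big(\prod_{j\le k}\kappa_j(1-q^{2j})^{-1}\big)D_{xy}^{\,k}\{A_0\}$. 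Expanding the seed in Cauchy polynomials as $A_0=\sum_n\mu_n\,p_n(x,y)$ and using the iterate $D_{xy}^{\,k}\{p_n\}=\big((q;q)_n/(q;q)_{n-k}\big)p_{n-k}$ (which vanishes for $k>n$ and hence truncates the inner sum at $k=n$), I would resum the resulting double series and recognize, for each fixed $n$, the coefficient of $\mu_n$ as exactly $\tilde{L}_n^{(\tilde{r},\tilde{s})}(\alpha,x,y,z,a)$; this gives $f=\sum_n\mu_n\tilde{L}_n^{(\tilde{r},\tilde{s})}$, and the uniqueness of the power-series expansion closes the equivalence.

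The step I expect to be the main obstacle is this reassembly inside the sufficiency argument: one must show that the constants $\kappa_j$ thrown out by the recurrence combine with the iterated factors $(q;q)_n/(q;q)_{n-k}$ to reproduce the coefficients $\binom{n}{k}_q\,\binom{\alpha}{k}_{-q}\,q^{\tau(\tilde{r},\tilde{s})+({}^k_2)}(a;q)_k$ of (\ref{dedref03}) term by term, which is the same coefficient identity as in the necessity step now read backwards. Keeping the three $z$-shifts $q^{\tilde{r}-\tilde{s}}$, $q^{1+\tilde{r}-\tilde{s}}$, $q^{2+\tilde{r}-\tilde{s}}$ correctly aligned through the reindexing is the delicate bookkeeping, and the analyticity of $f$ in a neighborhood of the origin is what legitimizes the term-by-term action of $D_{xy}$ and the interchange of the double summation.
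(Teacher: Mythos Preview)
Your proposal is correct and follows essentially the same route as the paper: expand $f$ as a power series in $z$, substitute into (\ref{thm1_1}) to obtain the one-step recurrence $A_k\propto D_{xy}\{A_{k-1}\}$, iterate to $A_k=c_k\,D_{xy}^{\,k}\{A_0\}$, expand $A_0$ in Cauchy polynomials, and resum; for the converse the paper simply asserts ``we verify,'' whereas you spell out the coefficient recurrence on $C_{n,k}$ explicitly. Your reformulation through $D_{xy}$ and the observation that $f(\alpha,qx,y,\ldots)-f(\alpha,x,q^{-1}y,\ldots)=-(x-q^{-1}y)D_{xy}\{f\}$ is precisely how the paper passes from (\ref{A-n rel-eq}) to the displayed recurrence for $A_k$, so the ``main obstacle'' you flag is exactly the iteration the paper carries out to reach (\ref{28}).
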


\begin{remark}
For $y=0$ in Theorem \ref{thm1}, we get the concluding remarks of Jia {\it et al} \cite{ZJia-BKhan2021}.
\end{remark}

To determine whether a given function is an analytic function in several complex variables, we often use the following Hartogs's theorem. For more information, please refer to \cite{19,7}.

\begin{lemma}[{{\cite[Hartogs's theorem]{9}}}]
If a complex-valued function is separately holomorphic (analytic)  in each variable  in an open domain $D \in\mathbb{C}^n$, then it is holomorphic (analytic) in $D$.
\end{lemma}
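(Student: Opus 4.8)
The plan is to isolate the genuine difficulty, which is that \emph{no} joint continuity or boundedness of $f$ is postulated, only holomorphy in each variable with the others frozen. First I would reduce to two variables: assuming the result in dimension $<n$, for each fixed $z_n$ the function $(z_1,\dots,z_{n-1})\mapsto f$ is jointly holomorphic by the inductive hypothesis, so $f$ is ``separately holomorphic'' in the two blocks $z'=(z_1,\dots,z_{n-1})$ and $z_n$, and the two-factor argument below applies verbatim (with subharmonic functions replaced by plurisubharmonic ones in the block variable). Next comes the elementary half, \textbf{Osgood's lemma}: if $f$ is separately holomorphic \emph{and} locally bounded on a closed bidisc $\overline{\Delta}=\{|z_1-a_1|\le r_1\}\times\{|z_2-a_2|\le r_2\}\subset D$, then iterating the one-variable Cauchy formula and using Fubini (legitimate once $f$ is bounded and measurable) gives
\[
 f(z)=\frac{1}{(2\pi i)^2}\oint_{|\zeta_1-a_1|=r_1}\oint_{|\zeta_2-a_2|=r_2}\frac{f(\zeta_1,\zeta_2)\,d\zeta_2\,d\zeta_1}{(\zeta_1-z_1)(\zeta_2-z_2)},
\]
and expanding the kernel in a double geometric series exhibits $f$ as a locally uniformly convergent power series, hence holomorphic. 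Everything therefore reduces to proving that separate holomorphy alone forces local boundedness.

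For this I would first extract boundedness on a thin slab by a \textbf{Baire category} argument. Since each slice $z_1\mapsto f(z_1,z_2)$ is holomorphic, hence continuous on the compact disc, the function $M(z_2)=\sup_{|z_1-a_1|\le r_1}|f(z_1,z_2)|$ is finite; it is lower semicontinuous because each $z_2\mapsto f(z_1,z_2)$ is continuous, so the closed sets $\{M\le N\}$ exhaust the $z_2$-disc, and Baire's theorem yields a subdisc $U$, centred at some $c$, on which $f$ is bounded, say $|f|\le M_0$, on $\{|z_1-a_1|\le r_1\}\times U$. By Osgood's lemma $f$ is jointly holomorphic there. Expanding in the second variable, $f(z_1,z_2)=\sum_{j\ge0}b_j(z_1)(z_2-c)^j$ with $b_j(z_1)=\frac{1}{2\pi i}\oint_{|\zeta-c|=\delta}f(z_1,\zeta)(\zeta-c)^{-j-1}\,d\zeta$ over a circle inside $U$; because $f$ is jointly holomorphic and bounded on that circle times the full $z_1$-disc, each $b_j$ is holomorphic on $\{|z_1-a_1|<r_1\}$ and obeys the uniform Cauchy estimate $|b_j(z_1)|\le M_0\,\delta^{-j}$ there.

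The crux, and the main obstacle, is the potential-theoretic step. For every $z_1$ the slice $z_2\mapsto f(z_1,z_2)$ is holomorphic on the whole $z_2$-disc, so the radius of convergence of $\sum_j b_j(z_1)(z_2-c)^j$ is at least some fixed $R>0$ (the distance from $c$ to the boundary), equivalently $\limsup_j\frac1j\log|b_j(z_1)|\le-\log R$ pointwise. The functions $u_j(z_1)=\frac1j\log|b_j(z_1)|$ are subharmonic (each $b_j$ being holomorphic) and, by the Cauchy estimate above, uniformly bounded from above on the $z_1$-disc. \textbf{Hartogs's lemma on subharmonic functions} then converts the pointwise $\limsup$ bound into a locally uniform one: for every $\varepsilon>0$ and every compact $K$ one has $u_j\le-\log R+\varepsilon$ on $K$ for all large $j$, i.e. $|b_j(z_1)|\le(e^{\varepsilon}/R)^j$ uniformly on $K$. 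Consequently $\sum_j b_j(z_1)(z_2-c)^j$ converges uniformly on a full bidisc neighbourhood, so $f$ is locally bounded there; Osgood's lemma now applies on this neighbourhood and yields joint holomorphy of $f$, completing the argument. I expect the delicate points to be the semicontinuity and measurability bookkeeping in the Baire step and, above all, the proof of Hartogs's lemma itself, which is where the theorem's real depth resides.
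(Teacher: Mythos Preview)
Your outline is the standard classical route to Hartogs's theorem---Osgood's lemma for the bounded case, a Baire category argument to get boundedness on a slab, and then Hartogs's lemma on subharmonic functions to propagate the radius of convergence---and it is essentially correct as sketched. The paper, however, does not prove this lemma at all: it is stated with a citation to Gunning's text and used as a black box in the proof of Theorem~\ref{thm1}. So there is no ``paper's own proof'' to compare against; your proposal supplies a genuine argument where the paper simply invokes the literature.
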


In order to prove Theorem \ref{thm1}, we need the following fundamental property of several complex variables.
\begin{lemma}[{\cite[Proposition 1]{3}}]
If $f(x_1,x_2,...,x_k)$ is analytic at the origin $(0,0,...,0)\in\mathbb{C}^k$, then, $f$ can be expanded in an absolutely convergent power series,
\begin{align*}
f(x_1,x_2,...,x_k)=\sum_{n_1,n_2,...,n_k=0}^\infty\alpha_{n_1,n_2,...,n_k}x_1^{n_1}x_2^{n_2}...x_k^{n_k}.
\end{align*}
\end{lemma}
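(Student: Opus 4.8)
The plan is to reduce the multivariate statement to the classical one-variable theory via the iterated Cauchy integral formula on a polydisc. By hypothesis $f$ is analytic (holomorphic) in some open neighborhood of the origin, so there exist radii $r_1,\dots,r_k>0$ for which $f$ is holomorphic on the polydisc $\Delta=\{(x_1,\dots,x_k)\in\C^k:\ |x_j|<r_j,\ j=1,\dots,k\}$. Fixing $\rho_j$ with $0<\rho_j<r_j$ and writing $T=\{(\zeta_1,\dots,\zeta_k):\ |\zeta_j|=\rho_j\}$ for the distinguished boundary (the torus), the first step is to establish, for every point with $|x_j|<\rho_j$, the representation
\[
f(x_1,\dots,x_k)=\frac{1}{(2\pi i)^k}\oint_{|\zeta_k|=\rho_k}\!\!\!\cdots\oint_{|\zeta_1|=\rho_1}\frac{f(\zeta_1,\dots,\zeta_k)}{(\zeta_1-x_1)\cdots(\zeta_k-x_k)}\,d\zeta_1\cdots d\zeta_k.
\]
This follows by applying the single-variable Cauchy integral formula successively in each variable, keeping the remaining ones fixed; holomorphy in each slot (guaranteed by the previous lemma, Hartogs's theorem, together with joint continuity) legitimizes the repeated integration.

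The second step is the geometric expansion of the kernel. For each $j$ and for $|x_j|\le r'<\rho_j=|\zeta_j|$ one has the uniformly convergent series
\[
\frac{1}{\zeta_j-x_j}=\frac{1}{\zeta_j}\sum_{n_j=0}^{\infty}\Big(\frac{x_j}{\zeta_j}\Big)^{n_j}=\sum_{n_j=0}^{\infty}\frac{x_j^{n_j}}{\zeta_j^{n_j+1}}.
\]
Taking the product of these $k$ series and substituting into the Cauchy representation, the uniform convergence on the compact torus $T$ allows the interchange of the multiple summation with the integration. This yields
\[
f(x_1,\dots,x_k)=\sum_{n_1,\dots,n_k=0}^{\infty}\alpha_{n_1,\dots,n_k}\,x_1^{n_1}\cdots x_k^{n_k},\qquad \alpha_{n_1,\dots,n_k}=\frac{1}{(2\pi i)^k}\oint_{T}\frac{f(\zeta_1,\dots,\zeta_k)\,d\zeta_1\cdots d\zeta_k}{\zeta_1^{n_1+1}\cdots\zeta_k^{n_k+1}},
\]
which is the desired expansion.

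Finally, I would record the absolute convergence explicitly. With $M:=\sup_{T}|f|<\infty$ (finite since $T$ is compact and $f$ is continuous there), the integral formula for the coefficients gives the Cauchy-type estimate $|\alpha_{n_1,\dots,n_k}|\le M\,\rho_1^{-n_1}\cdots\rho_k^{-n_k}$. Hence on any smaller polydisc $|x_j|\le r'_j<\rho_j$ the series is dominated term-by-term by the convergent product of geometric series $M\prod_{j=1}^{k}\sum_{n_j\ge 0}(r'_j/\rho_j)^{n_j}$, proving absolute (indeed uniform) convergence. I expect the main obstacle to be the rigorous justification of the two interchanges—first establishing the iterated Cauchy formula (which rests on applying the one-dimensional formula variable-by-variable and invoking joint continuity to integrate in turn), and then swapping the infinite multiple sum with the torus integral—both of which hinge on the uniform convergence afforded by keeping each $|x_j|$ strictly below $\rho_j$.
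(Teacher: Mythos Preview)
Your argument is the standard and correct textbook proof: the iterated Cauchy integral formula on a polydisc, geometric expansion of the Cauchy kernel, termwise integration via uniform convergence on the distinguished boundary, and Cauchy estimates to secure absolute convergence. There is nothing to fault in the logic.

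However, there is no proof in the paper to compare against. The lemma is not proved there at all; it is quoted verbatim as \cite[Proposition~1]{3} (Malgrange, \emph{Lectures on the Theory of Functions of Several Complex Variables}) and used as a black box in the proof of Theorem~\ref{thm1}. So rather than matching or diverging from the paper's approach, you have supplied the classical argument that the paper simply imports by citation. If anything, your write-up is exactly the proof one finds in Malgrange (or Gunning, or H\"ormander): Cauchy formula on a polydisc followed by the multigeometric kernel expansion.
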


\begin{proof}[Proof of Theorem \ref{thm1}]
  From the Hartogs's theorem and the theory of several complex variables, we assume that
\begin{align}\label{27}
f(\alpha,x,y,a,z,\tilde{r},\tilde{s})=\sum_{k=0}^\infty A_k(\alpha,x,y,a,\tilde{r},\tilde{s})z^k.
\end{align}\par
On one hand, substituting \eqref{27} into \eqref{thm1_1} yields
\begin{multline}
 (x-q^{-1}y)\sum_{k=0}^\infty(1-q^{2k}) A_k(\alpha,x,y,a,\tilde{r},\tilde{s})z^k\\
=\sum_{k=0}^\infty\Big\{q^{\alpha-\tilde{s}+k(\tilde{r}-\tilde{s})}+q^{-\tilde{s}+k(1+\tilde{r}-\tilde{s})}(1-aq^{\alpha}) -aq^{-\tilde{s}+k(2+\tilde{r}-\tilde{s})}\Big\}\Big\{A_k(\alpha,x,q^{-1}y,a,\tilde{r},\tilde{s})- A_k(\alpha,qx,y,a,\tilde{r},\tilde{s})\Big\}z^{k+1},
\end{multline}
which is equal to
\begin{multline}\label{A-n rel-eq}
 (x-q^{-1}y)\sum_{k=0}^\infty(1-q^{2k}) A_k(\alpha,x,y,a,\tilde{r},\tilde{s})z^k\\
=\sum_{k=0}^\infty q^{-\tilde{s}+k(\tilde{r}-\tilde{s})}\Big\{q^\alpha+q^{ k }(1-aq^{\alpha }) -aq^{2 k }\Big\}\Big\{A_k(\alpha,x,q^{-1}y,a,\tilde{r},\tilde{s})- A_k(\alpha,qx,y,a,\tilde{r},\tilde{s})\Big\}z^{k+1}.
\end{multline}\par
Equating coefficients of $z^k,\,k\geq 1$ on both sides of equation \eqref{A-n rel-eq}, we see that
\begin{multline}
 (x-q^{-1}y)(1-q^k)(1+q^{k }) A_k(\alpha,x,y,a,\tilde{r},\tilde{s}) \\
= q^{-\tilde{s}+(k-1)(\tilde{r}-\tilde{s})}(q^\alpha+q^{ k-1 })(1-aq^{k-1})\Big[A_{k-1}(\alpha,x,q^{-1}y,a,\tilde{r},\tilde{s})-A_{k-1}(\alpha,qx,y,a,\tilde{r},\tilde{s})\Big],
\end{multline}
or
\begin{align*}
A_k(\alpha,x,y,a,\tilde{r},\tilde{s})&=q^{\tilde{r}(k-1)-\tilde{s}k }\frac{(q^\alpha+q^{ k-1 })(1-aq^{k-1})}{(1-q^k)(1+q^{k }) }\cdot\frac{A_{k-1}(\alpha,x,q^{-1}y,a,\tilde{r},\tilde{s})-A_{k-1}(\alpha,qx,y,a,\tilde{r},\tilde{s})}{x-q^{-1}y} \\
&=q^{\alpha+\tilde{r}(k-1)-\tilde{s}k }\frac{(1+q^{-\alpha+ k-1 })(1-aq^{k-1})}{(1-q^k)(1+q^{k }) }\cdot D_{xy}\{A_{k-1}(\alpha,x,y,a,\tilde{r},\tilde{s})\}.
\end{align*}
The iteration then reveals that:
\begin{align*}
A_k(\alpha,x,y,a,\tilde{r},\tilde{s})=q^{k\alpha+\tilde{r}({}^k_2)-\tilde{s}({}^{k+1}_{\;\,\;2})}\frac{(-q^{-\alpha},a;q)_k }{(q^2;q^2)_k }\cdot D_{xy}^k\{A_0(\alpha,x,y,a,\tilde{r},\tilde{s})\}.
\end{align*}\par
Letting $\displaystyle f(\alpha,x,y,a,0,\tilde{r},\tilde{s})=A_0(\alpha,x,y,a,\tilde{r},\tilde{s})=\sum_{n=0}^\infty\mu_np_n(x,y)$ yields
\begin{align}\label{28}
A_k(\alpha,x,y,a,\tilde{r},\tilde{s})=q^{k\alpha+\tilde{r}({}^k_2)-\tilde{s}({}^{k+1}_{\;\,\;2})}\frac{(-q^{-\alpha},a;q)_k}{(q^2;q^2)_k }\cdot\sum_{n=0}^\infty\mu_n\frac{(q;q)_n}{(q;q)_{n-k}}p_{n-k}(x,y),
\end{align}
and we have
\begin{align*}
 f(\alpha,x,y,a,z,\tilde{r},\tilde{s})&=\sum_{k=0}^\infty q^{k\alpha+\tilde{r}({}^k_2)-\tilde{s}({}^{k+1}_{\;\,\;2})}\frac{(-q^{-\alpha},a;q)_k }{(q^2;q^2)_k }\sum_{n=0}^\infty\mu_n\frac{(q;q)_n}{(q;q)_{n-k}}p_{n-k}(x,y)z^k\\
&=\sum_{n=0}^\infty\mu_n\sum_{k=0}^n\begin{bmatrix}
                        n \\
                       k \\
    \end{bmatrix}_q\begin{bmatrix}
                        \alpha \\
                       k \\
    \end{bmatrix}_{-q}q^{\tau(\tilde{r},\tilde{s})+({}^k_2)} (a;q)_kp_{n-k}(x,y)z^k\\
&=\sum_{n=0}^\infty\mu_n \tilde{L}_n^{(\tilde{r},\tilde{s})} (\alpha,x,y,z,a).
\end{align*}
\par
On the other hand, if $f(\alpha,x,y,a,z,\tilde{r},\tilde{s})$ can be expanded in terms of 
$\tilde{L}_n^{(\tilde{r},\tilde{s})} (\alpha,x,y,z,a)$, we verify that $f(\alpha,x,y,a,z,\tilde{r},\tilde{s})$ satisfies equation \eqref{thm1_1}.  The proof of Theorem \ref{thm1} is complete.
\end{proof}

\section{Generating function of generalized $q$-polynomials}\label{section3}
In this section, we give the generating function of generalized $q$-polynomials by the method of $q$-difference equations.

\begin{thm}\label{thmc3}
Let $ \tilde{L}_n^{(\tilde{r},\tilde{s})} (\alpha,x,y,z,a)$ be the polynomials defined as in (\ref{dedref03}).
\begin{itemize}
\item For  $|xt|<1,$ and  $\tilde{r},\,\tilde{s} \in\mathbb{Z}^\ast$, we have:
\begin{align}\label{thmc3_1}
\sum_{n=0}^\infty \tilde{L}_n^{(\tilde{r},\tilde{s})} (\alpha,x,y,z,a)\frac{t^n}{(q;q)_n}=\frac{(yt;q)_\infty}{(xt;q)_\infty} \sum_{k=0}^\infty  \frac{(-q^{-\alpha},a;q)_k }{(q^2;q^2)_k } q^{k\alpha+\tilde{r}({}^k_2)-\tilde{s}({}^{k+1}_{\;\,\;2})} (zt)^k.
\end{align}
\item  For   $\tilde{r}=\tilde{s}=0$ in (\ref{thmc3_1}) and $\max\{|zt|,|xt|\}<1,$ we recover the Cauchy polynomials as follows:
\begin{align}\label{thmc3_2}
\sum_{n=0}^\infty \frac{p_{n}(x,y)\,t^n}{(q;q)_n}\sum_{k=0}^n   { n\,\atopwithdelims []\, k\,}_{q} { \alpha\,\atopwithdelims []\, k\,}_{-q}  \frac{  (-1)^kq^{k(k-n) }(a;q)_k}{  p_{k}(y, xq^{1-n})}z^k
=\frac{(yt;q)_\infty}{(xt;q)_\infty}  {}_{2}\Phi_1\left[\begin{array}{r}-q^{-\alpha}, a;
 \\\\
-q;
 \end{array}
q; ztq^\alpha \right].
\end{align}
\end{itemize}
\end{thm}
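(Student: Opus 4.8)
The plan is to invoke the method of $q$-difference equations provided by Theorem~\ref{thm1}. Denote the right-hand side of \eqref{thmc3_1} by
\[
F(\alpha,x,y,a,z,\tilde{r},\tilde{s})=\Theta(x,y)\,G(z),\qquad
\Theta(x,y):=\frac{(yt;q)_\infty}{(xt;q)_\infty},\qquad
G(z):=\sum_{k=0}^\infty\frac{(-q^{-\alpha},a;q)_k}{(q^2;q^2)_k}\,q^{k\alpha+\tilde{r}\binom{k}{2}-\tilde{s}\binom{k+1}{2}}(zt)^k,
\]
so that $\Theta$ carries all the $x,y$-dependence while $G$ collects the $z$-series. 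First I would check that $F$ is analytic near the origin in the seven variables: under the stated hypotheses $|xt|<1$ and $\tilde{r},\tilde{s}\in\mathbb{Z}^\ast$ the Cauchy factor $\Theta$ is holomorphic and $G$ is a convergent power series in $z$, so Hartogs's theorem applies. Granting that $F$ satisfies the $q$-difference equation \eqref{thm1_1}, Theorem~\ref{thm1} then yields an expansion $F=\sum_{n\ge 0}\mu_n\,\tilde{L}_n^{(\tilde{r},\tilde{s})}(\alpha,x,y,z,a)$ with coefficients $\mu_n$ independent of the seven variables, and it remains only to identify the $\mu_n$.

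The decisive step is to verify \eqref{thm1_1} for $F$, and the engine is the eigen-relation $D_{xy}\{\Theta\}=t\,\Theta$ for the operator \eqref{deffd}, equivalent to the shift rules $\Theta(qx,y)=(1-xt)\,\Theta(x,y)$ and $\Theta(x,q^{-1}y)=(1-q^{-1}yt)\,\Theta(x,y)$. Substituting $F=\Theta\,G$ into \eqref{thm1_1}, every difference $F(qx,y,\cdot)-F(x,q^{-1}y,\cdot)$ equals $-t\,(x-q^{-1}y)\,\Theta\,G(\cdot)$, so both sides share the factor $(x-q^{-1}y)\,\Theta$; cancelling it reduces \eqref{thm1_1} to an identity purely in the series $G$. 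Equating the coefficient of $z^{k+1}$ on the two sides then produces exactly the two-term recurrence satisfied by the coefficients $c_k:=\frac{(-q^{-\alpha},a;q)_k}{(q^2;q^2)_k}q^{k\alpha+\tilde{r}\binom{k}{2}-\tilde{s}\binom{k+1}{2}}$ of $G$, the only algebraic inputs being the factorization $q^{\alpha}+q^{k}(1-aq^{\alpha})-aq^{2k}=(q^{\alpha}+q^{k})(1-aq^{k})$ and the ratio $(q^2;q^2)_k/(q^2;q^2)_{k-1}=1-q^{2k}$. This coefficient bookkeeping, with its careful tracking of the $q^{\tilde{r}}$- and $q^{\tilde{s}}$-shifts, is the main obstacle; it is however the same computation already performed inside the proof of Theorem~\ref{thm1}, so no genuinely new difficulty arises.

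To pin down the coefficients I would set $z=0$. On one side $F|_{z=0}=\Theta(x,y)=(yt;q)_\infty/(xt;q)_\infty$, which by the $q$-binomial (Cauchy) theorem equals $\sum_{n\ge0}\frac{p_n(x,y)}{(q;q)_n}\,t^n$, since $p_n(x,y)=(y/x;q)_n x^n$. On the other side, \eqref{dedref03} shows that only the $k=0$ term survives at $z=0$, whence $\tilde{L}_n^{(\tilde{r},\tilde{s})}(\alpha,x,y,0,a)=p_n(x,y)$. Comparing the two expansions in the Cauchy polynomials $p_n(x,y)$ forces $\mu_n=t^n/(q;q)_n$, which is precisely \eqref{thmc3_1}.

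Finally, for the specialization $\tilde{r}=\tilde{s}=0$ I would simplify \eqref{thmc3_1} on both sides. On the right, the exponent collapses to $q^{k\alpha}$ and the elementary identity $(q^2;q^2)_k=(q;q)_k\,(-q;q)_k$ rewrites the $z$-series as $\sum_{k\ge0}\frac{(-q^{-\alpha};q)_k(a;q)_k}{(-q;q)_k(q;q)_k}(ztq^{\alpha})^k$, i.e. the ${}_{2}\Phi_1$ series on the right of \eqref{thmc3_2}. On the left, I would recast the Cauchy prefactor by means of $q^{\binom{k}{2}}p_{n-k}(x,y)=(-1)^k q^{k(k-n)}\,p_n(x,y)/p_k(y,xq^{1-n})$, which follows on reversing the product $p_n(x,y)/p_{n-k}(x,y)=\prod_{j=n-k}^{n-1}(x-q^j y)$ and pulling a factor $-q^{k-n-\ell}$ out of each factor of $p_k(y,xq^{1-n})$; substituting this into the inner sum produces the displayed form \eqref{thmc3_2} and completes the proof.
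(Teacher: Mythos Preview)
Your proposal is correct and follows essentially the same route as the paper: both verify that the right-hand side of \eqref{thmc3_1} satisfies the $q$-difference equation \eqref{thm1_1} (via the eigen-relation $D_{xy}\{\Theta\}=t\,\Theta$, which the paper packages as $A_k=C_k\,D_{xy}^k\{A_0\}$), and both identify the expansion coefficients from $A_0=(yt;q)_\infty/(xt;q)_\infty=\sum_{n\ge0}p_n(x,y)t^n/(q;q)_n$. The only organizational differences are that the paper rearranges the double sum directly rather than setting $z=0$ to read off $\mu_n$, and the paper does not spell out the passage to \eqref{thmc3_2}, which you handle via the identity $q^{\binom{k}{2}}p_{n-k}(x,y)=(-1)^k q^{k(k-n)}p_n(x,y)/p_k(y,xq^{1-n})$.
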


\begin{proof}[Proof of Theorem \ref{thmc3}]
By the Weierstrass $M$-test, the series $\displaystyle \sum_{n=0}^\infty M_n$ is convergent when
$\displaystyle \lim_{n\to +\infty}\left|\frac{M_{n+1}}{M_n}\right|<1$. 
We check that both sides of (\ref{thmc3_1}) are convergent if $|xt|<1,$ that is
\begin{align*}
&\lim_{n\to\infty}\left|
\frac{\tilde{L}_{n+1}^{(\tilde{r},\tilde{s})} (\alpha,x,y,z,a) \;t^{n+1}/ (q;q)_{n+1}}{\tilde{L}_n^{(\tilde{r},\tilde{s})} (\alpha,x,y,z,a)\; t^n/ (q;q)_{n}} 
\right|=|xt|<1,\\
&
\lim_{n\to\infty}\left|
\frac{  t^{n+1} \sum_{k=0}^n\begin{bmatrix}
                        n+1 \\
                       k \\
    \end{bmatrix}_q\begin{bmatrix}
                        \alpha \\
                       k \\
    \end{bmatrix}_{-q}q^{\tau(\tilde{r},\tilde{s})+({}^k_2)} (a;q)_kp_{n+1-k}(x,y)z^k/ (q;q)_{n+1}}{ t^{n} \sum_{k=0}^n\begin{bmatrix}
                        n\\
                       k \\
    \end{bmatrix}_q\begin{bmatrix}
                        \alpha \\
                       k \\
    \end{bmatrix}_{-q}q^{\tau(\tilde{r},\tilde{s})+({}^k_2)} (a;q)_kp_{n-k}(x,y)z^k/ (q;q)_{n}} 
\right|=|xt|<1.
\end{align*}
Denoting  by $f(\alpha,x,y,a,z,\tilde{r},\tilde{s})$  the right-hand side of equation \eqref{thmc3_1}, it  can equivalently be written  by
\begin{align}\label{44x}
f(\alpha,x,y,a,z,\tilde{r},\tilde{s})&= \sum_{k=0}^\infty  \frac{(-q^{-\alpha},a;q)_k }{(q^2;q^2)_k } q^{k\alpha+\tilde{r}({}^k_2)-\tilde{s}({}^{k+1}_{\;\,\;2})} z^k\cdot \frac{t^k\;(yt;q)_\infty}{(xt;q)_\infty}\cr
&=  \sum_{k=0}^\infty  \frac{(-q^{-\alpha},a;q)_k }{(q^2;q^2)_k } q^{k\alpha+\tilde{r}({}^k_2)-\tilde{s}({}^{k+1}_{\;\,\;2})} z^k D_{xy}^k\left\{ \frac{(yt;q)_\infty}{(xt;q)_\infty}\right\}.
\end{align}
Letting $\displaystyle
f(\alpha,x,y,a,z,\tilde{r},\tilde{s})=\sum_{k=0}^\infty A_k(\alpha,x,y,a,\tilde{r},\tilde{s})z^k$ and
\begin{align}
A_k(\alpha,x,y,a,\tilde{r},\tilde{s})=q^{k\alpha+\tilde{r}({}^k_2)-\tilde{s}({}^{k+1}_{\;\,\;2})}\frac{(-q^{-\alpha},a;q)_k}{(q^2;q^2)_k }  D_{xy}^k\left\{ \frac{(yt;q)_\infty}{(xt;q)_\infty}\right\}, \label{qt}
\end{align}
we obtain
\begin{align}
A_0(\alpha,x,y,a,\tilde{r},\tilde{s})=  \frac{(yt;q)_\infty}{(xt;q)_\infty},\label{dcqt}
\end{align}
and $f(\alpha,x,y,a,0,\tilde{r},\tilde{s})=  A_0(\alpha,x,y,a,\tilde{r},\tilde{s})$. Taking (\ref{dcqt}) into (\ref{qt}), we get:
 \begin{align}
A_k(\alpha,x,y,a, \tilde{r},\tilde{s})=q^{k\alpha+\tilde{r}({}^k_2)-\tilde{s}({}^{k+1}_{\;\,\;2})}\frac{(-q^{-\alpha},a;q)_k}{(q^2;q^2)_k }\cdot D_{xy}^k\{A_0(\alpha,x,y,a, \tilde{r}, \tilde{s})\}.
\end{align}
Through above identities, $f(\alpha,x,y,a,z,\tilde{r},\tilde{s})$ satisfies the equation (\ref{thm1_1}). So, we have
\begin{align*}
f(\alpha,x,y,a,z,\tilde{r},\tilde{s})&=
  \sum_{k=0}^n\frac{(-q^{-\alpha},a;q)_k }{(q^2;q^2)_k } q^{k\alpha+\tilde{r}({}^k_2)-\tilde{s}({}^{k+1}_{\;\,\;2})} z^k \sum_{n=0}^\infty\frac{p_{n}(x,y)\;t^{n+k}}{(q;q)_{n }}
\cr&=  \sum_{n=0}^\infty  \sum_{k=0}^n\frac{(-q^{-\alpha},a;q)_k }{(q^2;q^2)_k } q^{k\alpha+\tilde{r}({}^k_2)-\tilde{s}({}^{k+1}_{\;\,\;2})} z^k\frac{p_{n-k}(x,y)\;t^n}{(q;q)_{n-k}}\cr
&=\sum_{n=0}^\infty\frac{t^n}{(q;q)_n}\sum_{k=0}^n\begin{bmatrix}
                        n \\
                       k \\
    \end{bmatrix}_q\begin{bmatrix}
                        \alpha \\
                       k \\
    \end{bmatrix}_{-q}q^{\tau(\tilde{r},\tilde{s})+({}^k_2)} (a;q)_kp_{n-k}(x,y)z^k\cr
&=\sum_{n=0}^\infty\frac{t^n}{(q;q)_n}\tilde{L}_n^{(\tilde{r},\tilde{s})} (\alpha,x,y,z,a),
\end{align*}
which  is the left-hand side of (\ref{thmc3_1}).  The proof is complete.
\end{proof}
\begin{remark}
Setting $y=0$, in (\ref{thmc3_1}), we get the concluding remarks of \cite{ZJia-BKhan2021}:
\begin{align}\label{thmccc3_1}
\sum_{n=0}^\infty L_{\tilde{r},\tilde{s}} (\alpha,x,z,a)\frac{t^n}{(q;q)_n}=\frac{1}{(xt;q)_\infty} \sum_{k=0}^\infty  \frac{(-q^{-\alpha},a;q)_k }{(q^2;q^2)_k } q^{k\alpha+\tilde{r}({}^k_2)-\tilde{s}({}^{k+1}_{\;\,\;2})} (zt)^k,\; \;\; |xt|<1.
\end{align}
For
$\alpha \to \infty,\,\tilde{r}=\tilde{s}=0,\,x\to y,\,y\to x,\,z=-z$ and $a=-q$, in (\ref{thmc3_1}), we get the concluding remarks of   \cite{MAA2016}:
\begin{align}\label{ctr}
\sum_{n=0}^\infty F_n(x,y,z;q)\frac{(-1)^n q^{({}^n_2)}\,t^n}{(q;q)_n}=\frac{(xt, zt;q)_\infty}{(yt;q)_\infty},\; \;\; |yt|<1.
\end{align}
\end{remark} 
\section{Rogers formula for generalized $q$-polynomials}\label{section4}
In this section, we give and prove the Rogers formula for generalized $q$-polynomials by using the $q$-difference equations, so that we derive  Rogers formula for the trivariate $q$-polynomials:
\par
Chen and Liu \cite{WYCC-ZGL1997} have defined the $q$-exponential operator as follows  (see \cite{LHS-AAS2013}):
\begin{align}
T(bD_a)=\sum_{n=0}^\infty \frac{(bD_a)^n}{(q;q)_n},
\end{align}
where the usual $q$-differential operator, or $q$-derivative, is defined by
\be
D_af(a)=\frac{f(a)-f(qa)}{a}.
\ee
The Leibniz rule for $D_a$ is the following identity, which is a variation of $q$-binomial theorem \cite{Roman1985}:
\begin{align}
\label{RUe}
    D_a^n\{f(a)g(a)\} =\sum_{k=0}^nq^{k(k-n)}\begin{bmatrix}n \\k \\ \end{bmatrix}_qD_a^k\{f(a)\}D_a^{n-k}\Bigl\{g\bigl(aq^k\bigr)\Bigr\},
\end{align}
where $D_q^0$ is understood as the identity.
The following property of $D_a$ is straightforward, but important.
\begin{lemma} 
\label{dAMM}
\begin{equation}
D_a^n\left\{\frac{(as;q)_\infty}{(a\omega;q)_\infty}\right\}
= \omega^n\; \frac{(s/\omega;q)_n}
{(as;q)_n}\;\frac{(as;q)_\infty}{( a\omega;q)_\infty}.\label{aberll}
\end{equation}

\end{lemma}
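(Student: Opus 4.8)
The plan is to prove \eqref{aberll} by induction on $n$, arranging each step so that it reduces to a single application of $D_a$ by exploiting a shift in the parameter $s$. First I would record the base computation. Writing $F_s(a):=\dfrac{(as;q)_\infty}{(a\omega;q)_\infty}$ and using the one-factor peel-offs $(as;q)_\infty=(1-as)(asq;q)_\infty$ and $(a\omega;q)_\infty=(1-a\omega)(a\omega q;q)_\infty$, one gets $F_s(qa)=\dfrac{1-a\omega}{1-as}\,F_s(a)$, and hence directly from the definition $D_af(a)=\bigl(f(a)-f(qa)\bigr)/a$,
\[
D_aF_s(a)=\frac{F_s(a)-F_s(qa)}{a}=\frac{F_s(a)}{a}\cdot\frac{(1-as)-(1-a\omega)}{1-as}=\frac{\omega-s}{1-as}\,F_s(a),
\]
which is exactly \eqref{aberll} for $n=1$.

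The observation that makes the induction collapse is a parameter-shift identity. Since $(as;q)_\infty=(as;q)_n\,(asq^n;q)_\infty$, we have $\dfrac{F_s(a)}{(as;q)_n}=\dfrac{(asq^n;q)_\infty}{(a\omega;q)_\infty}=F_{sq^n}(a)$; that is, dividing by the finite product $(as;q)_n$ returns a function of the same shape with $s$ replaced by $sq^n$. I would therefore rewrite the inductive hypothesis $D_a^nF_s=\omega^n\dfrac{(s/\omega;q)_n}{(as;q)_n}F_s$ in the equivalent form $D_a^nF_s=\omega^n\,(s/\omega;q)_n\,F_{sq^n}$, in which the scalar prefactor no longer depends on $a$.

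Applying $D_a$ once more then only requires differentiating $F_{sq^n}$, for which the base computation (with $s\mapsto sq^n$) gives $D_aF_{sq^n}=\dfrac{\omega-sq^n}{1-asq^n}F_{sq^n}$. Thus
\[
D_a^{n+1}F_s=\omega^n\,(s/\omega;q)_n\,\frac{\omega-sq^n}{1-asq^n}\,F_{sq^n},
\]
and it remains to reassemble the right-hand side into the claimed closed form. Using $\omega\bigl(1-(s/\omega)q^n\bigr)=\omega-sq^n$ together with the one-step recurrences $(s/\omega;q)_{n+1}=(s/\omega;q)_n\bigl(1-(s/\omega)q^n\bigr)$ and $(as;q)_{n+1}=(as;q)_n(1-asq^n)$, and converting back via $F_{sq^n}=F_s/(as;q)_n$, every factor matches $\omega^{n+1}\dfrac{(s/\omega;q)_{n+1}}{(as;q)_{n+1}}F_s$, completing the induction.

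I do not expect a genuine obstacle here: the only subtle point is recognizing the shift identity $F_s/(as;q)_n=F_{sq^n}$, which keeps the entire $a$-dependence confined to a single factor of the same type and lets the recursion close in one step. This is what allows me to avoid invoking the Leibniz rule \eqref{RUe} on the rational prefactor $1/(as;q)_n$, which would otherwise make the inductive step considerably more cumbersome.
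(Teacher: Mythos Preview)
Your induction argument is correct. The base case computation is clean, and the key shift identity $F_s(a)/(as;q)_n=F_{sq^n}(a)$ is exactly what keeps the $a$-dependence in a single factor of the same type, so that one more application of $D_a$ closes the recursion. The reassembly via $(s/\omega;q)_{n+1}=(s/\omega;q)_n\bigl(1-(s/\omega)q^n\bigr)$ and $(as;q)_{n+1}=(as;q)_n(1-asq^n)$ is routine and correct.

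As for comparison: the paper does not actually supply a proof of this lemma; it is stated as ``straightforward, but important'' and then used in the proof of Lemma~\ref{dsdpqro1}. Your self-contained inductive argument therefore fills a gap the paper leaves to the reader, and does so without invoking the Leibniz rule~\eqref{RUe}, which is a pleasant economy.
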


\begin{lemma}
\label{dsdpqro1}

For $k\in\mathbb{N}_0$ and $\max\{|x\omega|, |xt|\}<1$, we have:
\begin{align}
\label{dte}
T(tD_\omega)\left\{\frac{(y\omega;q)_\infty}{(x\omega;q)_\infty}  \omega^k\right\}=\frac{ (y\omega;q)_\infty}{(x\omega;q)_\infty}  \omega^k \sum_{j=0}^k  \frac{(-1)^jq^{kj-({}^j_2)} (q^{-k},x\omega;q)_j\, (t/\omega)^j }{(y\omega ,q;q)_j}    {}_{2}\Phi_1\left[\begin{array}{r}y/x,0;
 \\\\
y\omega q^j;
 \end{array}
q; xt\right].
\end{align}
\end{lemma}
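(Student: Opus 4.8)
The plan is to treat the $q$-exponential operator as a formal power series in the $q$-derivative and evaluate it term by term. Writing $T(tD_\omega)=\sum_{n=0}^\infty t^nD_\omega^n/(q;q)_n$, everything reduces to finding a closed form for $D_\omega^n\bigl\{\omega^k\,(y\omega;q)_\infty/(x\omega;q)_\infty\bigr\}$ and then resumming this against $t^n/(q;q)_n$. The finite sum over $j$ from $0$ to $k$ on the right-hand side of \eqref{dte} signals that the Leibniz rule \eqref{RUe} should be applied with the monomial $\omega^k$ as the \emph{differentiated} factor, since its $q$-derivatives terminate.

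First I would split the product via \eqref{RUe}, taking $f(\omega)=\omega^k$ and $g(\omega)=(y\omega;q)_\infty/(x\omega;q)_\infty$ and renaming the summation index there to $j$ to avoid clashing with the fixed exponent $k$. A one-line induction gives the elementary fact $D_\omega^j\{\omega^k\}=\frac{(q;q)_k}{(q;q)_{k-j}}\omega^{k-j}$ for $0\le j\le k$ and $0$ otherwise, which truncates the Leibniz sum at $j=k$. For the second factor I would invoke Lemma \ref{dAMM} on $D_\omega^{n-j}\{g(\omega q^j)\}=D_\omega^{n-j}\bigl\{(yq^j\omega;q)_\infty/(xq^j\omega;q)_\infty\bigr\}$, reading off the substitution $a\mapsto\omega$, $s\mapsto yq^j$ and the lemma's internal parameter $\mapsto xq^j$; this produces $(xq^j)^{n-j}\frac{(y/x;q)_{n-j}}{(yq^j\omega;q)_{n-j}}\frac{(yq^j\omega;q)_\infty}{(xq^j\omega;q)_\infty}$.

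Next I would peel off the infinite products through $\frac{(yq^j\omega;q)_\infty}{(xq^j\omega;q)_\infty}=\frac{(x\omega;q)_j}{(y\omega;q)_j}\frac{(y\omega;q)_\infty}{(x\omega;q)_\infty}$, so the common factor $\frac{(y\omega;q)_\infty}{(x\omega;q)_\infty}\omega^k$ factors out. Summing over $n$ against $t^n/(q;q)_n$, interchanging the two sums, and substituting $n=m+j$, the bookkeeping collapses neatly: the Gaussian binomial satisfies $\binom{n}{j}_q/(q;q)_n=1/\bigl[(q;q)_m(q;q)_j\bigr]$, and the $q$-powers cancel exactly, $q^{j(j-n)}(xq^j)^{n-j}=x^{\,n-j}=x^m$. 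Hence the inner $m$-sum becomes $\frac{t^j}{(q;q)_j}\sum_{m\ge0}\frac{(y/x;q)_m}{(yq^j\omega;q)_m}\frac{(xt)^m}{(q;q)_m}$, which is precisely the ${}_{2}\Phi_{1}\bigl[y/x,0;\,y\omega q^j;\,q;xt\bigr]$ in the statement.

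Finally I would reconcile the leftover coefficient $\frac{(q;q)_k}{(q;q)_{k-j}}$ with the stated $(-1)^jq^{kj-\binom{j}{2}}(q^{-k};q)_j$ via the reversal $(q^{-k};q)_j=(-1)^jq^{\binom{j}{2}-kj}\frac{(q;q)_k}{(q;q)_{k-j}}$, obtained by writing each factor as $1-q^{i-k}=-q^{i-k}(1-q^{k-i})$ (a special case of \eqref{usu}); combining this with $\omega^{-j}t^j$ and grouping $(q^{-k},x\omega;q)_j$ and $(y\omega,q;q)_j$ reproduces the summand of \eqref{dte} verbatim. The only genuinely delicate point is justifying the interchange of the double summation together with the re-indexing $n=m+j$; this is legitimate by absolute convergence once $\max\{|x\omega|,|xt|\}<1$, which I would verify by the same Weierstrass $M$-test argument used in the proof of Theorem \ref{thmc3}. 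Everything else is routine tracking of $q$-powers.
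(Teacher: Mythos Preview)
Your proposal is correct and follows essentially the same route as the paper: apply the Leibniz rule \eqref{RUe} with $f(\omega)=\omega^k$ and $g(\omega)=(y\omega;q)_\infty/(x\omega;q)_\infty$, use $D_\omega^{j}\{\omega^k\}=\frac{(q;q)_k}{(q;q)_{k-j}}\omega^{k-j}$ and Lemma~\ref{dAMM} for the ratio, reindex $n=m+j$, identify the inner sum as the ${}_2\Phi_1$, and finally rewrite $(q;q)_k/(q;q)_{k-j}$ via $(q^{-k};q)_j$. The only difference is cosmetic ordering (the paper reindexes before invoking \eqref{aberll}, you after) and that you make the convergence justification explicit where the paper proceeds formally.
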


\begin{proof}
By    means of the  Leibniz rule (\ref{RUe}),  the left-hand side of (\ref{dte}) equals
\begin{align*}
 & \sum_{n=0}^\infty\frac{t^n}{(q;q)_n} \sum_{j=0}^nq^{j(j-n)}\begin{bmatrix}n \\j \\ \end{bmatrix}_qD_\omega^{j} \{\omega^k\}D_\omega^{n-j}\left\{\frac{(y\omega q^j;q)_\infty}{(x\omega q^j;q)_\infty} \right\}\cr
& \qquad \qquad\qquad= \sum_{n=0}^\infty\sum_{j=0}^n\frac{q^{j(j-n)}\;t^n}{(q;q)_j(q;q)_{n-j}} \frac{(q;q)_k}{(q;q)_{k-j}} \omega^{k-j} D_\omega^{n-j}\left\{\frac{(y\omega q^j;q)_\infty}{(x\omega q^j;q)_\infty} \right\}\cr
& \qquad \qquad\qquad=  \sum_{n=0}^\infty\sum_{j=0}^\infty\frac{q^{-jn}\;t^{n+j}}{(q;q)_j(q;q)_{n}} \frac{(q;q)_k}{(q;q)_{k-j}} \omega^{k-j} D_\omega^{n}\left\{\frac{(y\omega q^j;q)_\infty}{(x\omega q^j;q)_\infty} \right\} \quad (\mbox{by } (\ref{aberll}))
\cr
&\qquad \qquad \qquad=  \sum_{n=0}^\infty\sum_{j=0}^\infty\frac{q^{-jn}\;t^{n+j}}{(q;q)_j(q;q)_{n}} \frac{(q;q)_k}{(q;q)_{k-j}} \omega^{k-j} (xq^j)^n \frac{(y/x;q)_n(y\omega q^j;q)_\infty}{(y\omega q^j;q)_n(x\omega q^j;q)_\infty}
\cr
& \qquad \qquad\qquad= \frac{ (y\omega;q)_\infty}{(x\omega;q)_\infty}  \omega^k \sum_{j=0}^\infty  \frac{ (x\omega;q)_j\, (t/\omega)^j }{(y\omega ,q;q)_j}   \frac{(q;q)_k}{(q;q)_{k-j}} \sum_{n=0}^\infty\frac{(y/x;q)_n(xt)^n }{ (y\omega q^j,q;q)_n }
\cr
&\qquad \qquad \qquad= \frac{ (y\omega;q)_\infty}{(x\omega;q)_\infty}  \omega^k \sum_{j=0}^k  \frac{(-1)^jq^{kj-({}^j_2)} (q^{-k},x\omega;q)_j\, (t/\omega)^j }{(y\omega ,q;q)_j}    {}_{2}\Phi_1\left[\begin{array}{r}y/x,0;
 \\\\
y\omega q^j;
 \end{array}
q; xt\right],
\end{align*}
which is the right-hand side of (\ref{dte}).
\end{proof}
\par
 The generalized Rogers-Szeg\"o polynomials \cite{JC1982,HLS-MAA2014} are defined as
\begin{align}
\label{RS-P}
   r_n(x,y)=\sum_{k=0}^n \begin{bmatrix}n \\k \\ \end{bmatrix}_qx^k y^{n-k},
\end{align}
where  \cite{HLS-MAA2014}
\be
r_n(x,y)= T(xD_y)\{y^n\}.\label{RSpo}
\ee

Now, we are in position to give and prove the following Rogers formula for generalized $q$-polynomials by using the $q$-difference equations\textcolor{red}{.}
\begin{thm}\label{thm3d}
Let $ \tilde{L}_n^{(\tilde{r},\tilde{s})} (\alpha,x,y,z,a)$ be the polynomials defined as in (\ref{dedref03}). 
\begin{itemize}
\item   For $|\omega x|<1$, we have:
\begin{multline}\label{dthm3_1}
\sum_{n=0}^\infty\sum_{m=0}^\infty \tilde{L}_{n+m}^{(\tilde{r},\tilde{s})} (\alpha,x,y,z,a)\frac{t^n}{(q;q)_n}\frac{\omega^m}{(q;q)_m}\\= \frac{(y\omega;q)_\infty}{(x\omega;q)_\infty}     \sum_{k=0}^\infty \sum_{j=0}^k q^{k\alpha+\tilde{r}({}^k_2)-\tilde{s}({}^{k+1}_{\;\,\;2})}\frac{(-q^{-\alpha},a;q)_k (\omega z)^k }{(-q;q)_k(q;q)_{k-j}}\frac{(x\omega;q)_j (t/\omega)^j}{(y\omega,q;q)_j}  {}_{2}\Phi_1\left[\begin{array}{r}y/x,0;
 \\\\
y\omega q^j;
 \end{array}
q; xt \right].
\end{multline}
\item   For $\max\{|\omega x|,|xt| \}<1$, and  $\tilde{r}=\tilde{s}=0,$ we have:
\begin{multline}\label{dthm3_2}
\sum_{n=0}^\infty\sum_{m=0}^\infty \tilde{L}_{n+m}^{(0,0)} (\alpha,x,y,z,a)\frac{t^n}{(q;q)_n} \frac{\omega^m}{(q;q)_m}\\= \frac{(y\omega;q)_\infty}{(x\omega;q)_\infty}     \sum_{k=0}^\infty \sum_{j=0}^k q^{k\alpha}\frac{(-q^{-\alpha},a;q)_k (\omega z)^k }{(-q;q)_k(q;q)_{k-j}}\frac{(x\omega;q)_j (t/\omega)^j}{(y\omega,q;q)_j}  {}_{2}\Phi_1\left[\begin{array}{r}y/x,0;
 \\\\
y\omega q^j;
 \end{array}
q; xt \right].
\end{multline}
\end{itemize}
\end{thm}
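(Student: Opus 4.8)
The plan is to reduce the double Rogers-type series on the left of \eqref{dthm3_1} to the single generating function already evaluated in Theorem \ref{thmc3}, by peeling off the summation index $m$ with the $q$-exponential operator $T(tD_\omega)$, and then to carry out the resulting operator action termwise using Lemma \ref{dsdpqro1}. Before the main computation I would record convergence: exactly as in the proof of Theorem \ref{thmc3}, a ratio test shows the left-hand series converges for $|\omega x|<1$, while the right-hand side converges under the stated hypotheses, the extra requirement $|xt|<1$ being what is needed to make sense of the ${}_2\Phi_1$ factor in the specialized case $\tilde r=\tilde s=0$.

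The first main step is the operator reduction. From $D_\omega\{\omega^N\}=(1-q^N)\omega^{N-1}$ one gets $D_\omega^{\,i}\{\omega^N\}=\frac{(q;q)_N}{(q;q)_{N-i}}\omega^{N-i}$, so that, by the definition $T(tD_\omega)=\sum_{i\ge0}t^iD_\omega^{\,i}/(q;q)_i$,
\[
T(tD_\omega)\Big\{\frac{\omega^N}{(q;q)_N}\Big\}=\sum_{i=0}^N\frac{t^i\omega^{N-i}}{(q;q)_i(q;q)_{N-i}}.
\]
Multiplying by $\tilde L_N^{(\tilde r,\tilde s)}(\alpha,x,y,z,a)$, summing over $N$, and reindexing $N=n+m$ identifies the left-hand side of \eqref{dthm3_1} as
\[
T(tD_\omega)\Big\{\sum_{N=0}^\infty \tilde L_N^{(\tilde r,\tilde s)}(\alpha,x,y,z,a)\,\frac{\omega^N}{(q;q)_N}\Big\}.
\]

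Next I would invoke Theorem \ref{thmc3} with $t$ replaced by $\omega$ (legitimate since $|x\omega|<1$): the inner generating function equals $\frac{(y\omega;q)_\infty}{(x\omega;q)_\infty}\sum_{k\ge0}\frac{(-q^{-\alpha},a;q)_k}{(q^2;q^2)_k}\,q^{k\alpha+\tilde r({}^k_2)-\tilde s({}^{k+1}_{\;\,\;2})}(z\omega)^k$. Because $T(tD_\omega)$ is linear and acts only on $\omega$, while the factors $z^k$ and the scalar coefficients are $\omega$-independent, I would move $T(tD_\omega)$ inside the $k$-sum. This reduces the entire computation to evaluating $T(tD_\omega)\{\frac{(y\omega;q)_\infty}{(x\omega;q)_\infty}\omega^k\}$, which is precisely the content of Lemma \ref{dsdpqro1}.

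Finally I would substitute Lemma \ref{dsdpqro1} and simplify the coefficients using two elementary identities: $(q^2;q^2)_k=(q;q)_k(-q;q)_k$, and $(-1)^j q^{\,kj-({}^j_2)}(q^{-k};q)_j=\frac{(q;q)_k}{(q;q)_{k-j}}$. The first turns $1/(q^2;q^2)_k$ into $1/\big((-q;q)_k(q;q)_k\big)$, and the second converts the factor $(q^{-k};q)_j$ from Lemma \ref{dsdpqro1} (together with the $(q;q)_j$ hidden in $(y\omega,q;q)_j$) into $1/\big((q;q)_{k-j}(q;q)_j\big)$; collecting the surviving powers of $q$ then reproduces exactly the double sum on the right of \eqref{dthm3_1}. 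Setting $\tilde r=\tilde s=0$, so that the $q$-power collapses to $q^{k\alpha}$, yields \eqref{dthm3_2}. The main obstacle is the first reduction, namely justifying the interchange of the double series with the operator $T(tD_\omega)$ and correctly recognizing the Rogers--Szeg\"o-type convolution $\sum_{i+m=N}\frac{t^i\omega^m}{(q;q)_i(q;q)_m}$ as $T(tD_\omega)\{\omega^N/(q;q)_N\}$; once that reduction is secured, everything after it is the bookkeeping against Lemma \ref{dsdpqro1} just described.
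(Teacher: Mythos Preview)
Your proposal is correct and follows essentially the same approach as the paper: both arguments hinge on the identification of the double Rogers series with $T(tD_\omega)$ applied to the single generating function of Theorem~\ref{thmc3}, followed by the evaluation of $T(tD_\omega)\big\{\tfrac{(y\omega;q)_\infty}{(x\omega;q)_\infty}\omega^k\big\}$ via Lemma~\ref{dsdpqro1} and the algebraic simplifications $(q^2;q^2)_k=(q;q)_k(-q;q)_k$ and $(-1)^j q^{kj-\binom{j}{2}}(q^{-k};q)_j=(q;q)_k/(q;q)_{k-j}$. The only cosmetic difference is that the paper traverses the chain of equalities from the right-hand side toward the left (invoking \eqref{RSpo} to write $T(tD_\omega)\{\omega^m\}=r_m(t,\omega)$), whereas you run the same computation from left to right.
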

\begin{proof}[Proof of Theorem \ref{thm3d}]
The right-hand side of equation \eqref{dthm3_1} can equivalently be written  as:
\begin{align*}
f(\alpha,x,y,a,z,\tilde{r},\tilde{s})&=
\frac{ (y\omega;q)_\infty}{(x\omega;q)_\infty}  \sum_{k=0}^\infty \sum_{j=0}^\infty  q^{k\alpha+\tilde{r}({}^k_2)-\tilde{s}({}^{k+1}_{\;\,\;2})}\frac{(-q^{-\alpha},a;q)_k(\omega z)^k  }{(-q;q)_k(q;q)_{k-j} }  \frac{ (x\omega;q)_j\, (t/\omega)^j }{(y\omega ,q;q)_j}   {}_{2}\Phi_1\left[\begin{array}{r}y/x,0;
 \\\\
y\omega q^j;
 \end{array}
q; xt\right]\cr
&=    \sum_{k=0}^\infty q^{k\alpha+\tilde{r}({}^k_2)-\tilde{s}({}^{k+1}_{\;\,\;2})}\frac{(-q^{-\alpha},a;q)_k z^k }{(q^2;q^2)_k } \sum_{j=0}^k \frac{(y\omega q^j;q)_\infty (q;q)_k \omega^{k-j}  t^j}{ (x\omega q^j;q)_\infty(q;q)_j (q;q)_{k-j}} {}_{2}\Phi_1\left[\begin{array}{r}y/x,0;
 \\\\
y\omega q^j;
 \end{array}
q; xt \right].\cr
&=   \sum_{k=0}^\infty  \frac{(-q^{-\alpha},a;q)_k }{(q^2;q^2)_k } q^{k\alpha+\tilde{r}({}^k_2)-\tilde{s}({}^{k+1}_{\;\,\;2})}z^kT(tD_\omega)\left\{\frac{(y\omega;q)_\infty}{(x\omega;q)_\infty}  \omega^k\right\}
\cr
&= T(tD_\omega)\left\{\frac{(y\omega;q)_\infty}{(x\omega;q)_\infty} \sum_{k=0}^\infty  \frac{(-q^{-\alpha},a;q)_k }{(q^2;q^2)_k } q^{k\alpha+\tilde{r}({}^k_2)-\tilde{s}({}^{k+1}_{\;\,\;2})} (\omega z)^k\right\}\;\, (\mbox{by using } (\ref{thmc3_1}))
\cr
&= T(tD_\omega)\left\{\sum_{m=0}^\infty \tilde{L}_{m}^{(\tilde{r},\tilde{s})} (\alpha,x,y,z,a)\frac{\omega^m}{(q;q)_m}\right\}
\cr
&= \sum_{m=0}^\infty \tilde{L}_{m}^{(\tilde{r},\tilde{s})} (\alpha,x,y,z,a)\frac{1}{(q;q)_m}T(tD_\omega)\{\omega^m\}\quad (\mbox{by } (\ref{RSpo}))
\cr
&= \sum_{m=0}^\infty \tilde{L}_{m}^{(\tilde{r},\tilde{s})} (\alpha,x,y,z,a)\frac{ r_m(t,\omega) }{(q;q)_m}\cr
&=\sum_{m=0}^\infty \tilde{L}_{m}^{(\tilde{r},\tilde{s})} (\alpha,x,y,z,a)\frac{1}{(q;q)_m} \sum_{n=0}^m\begin{bmatrix}
                        m \\
                       n \\
    \end{bmatrix}_q t^n\omega^{m-n}\cr
&=\sum_{n=0}^\infty\sum_{m=n}^\infty \tilde{L}_{m}^{(\tilde{r},\tilde{s})} (\alpha,x,y,z,a)\frac{t^n}{(q;q)_n}\frac{\omega^{m-n}}{(q;q)_{m-n}}.
\end{align*}
By  setting $m  \to m+n$, 
 we get the left-hand side of (\ref{dthm3_1}). This completes the proof.
\end{proof}

As a special case of Theorem \ref{thm3d}, if   we take $\alpha \to \infty,\,\tilde{r}=\tilde{s}=0,\,x\to y,\,y\to x,\,z=-z$ and $a=-q$, in (\ref{dthm3_1}), we obtain the following corollary:
\begin{corollary}[{\cite[Theorem 3.1]{MAA2016}}]
  For $\max\{|yt|,|\omega y|\}<1$, we have:
\begin{align}\label{cr}
&\sum_{n=0}^\infty\sum_{m=0}^\infty  F_{n+m} (x,y,z;q) (-1)^{n+m} q^{({}^{n+m}_{\,\;\;\;2})}\frac{t^n}{(q;q)_n}\frac{\omega^m}{(q;q)_m}\cr
&\qquad\quad\quad\qquad\qquad = \frac{(x\omega, z\omega;q)_\infty}{(y\omega;q)_\infty}     \sum_{j=0}^\infty  \frac{(-1)^jq^{ ({}^{j}_2)} (y\omega;q)_j (zt)^j}{(x\omega, z\omega,q;q)_j}  {}_{2}\Phi_1\left[\begin{array}{r}x/y,0;
 \\\\
x\omega q^j;
 \end{array}
q; yt \right].
\end{align}
\end{corollary}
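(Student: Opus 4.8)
The plan is to obtain the corollary as the stated specialisation $\alpha\to\infty$, $\tilde r=\tilde s=0$, $x\leftrightarrow y$, $z\mapsto-z$, $a=-q$ of the Rogers formula \eqref{dthm3_1} in Theorem \ref{thm3d}. On the left-hand side this is immediate: by Remark \ref{remark1}(2) one has $\tilde L_{n+m}^{(0,0)}(\infty,y,x,-z,-q)=(-1)^{n+m}q^{({}^{n+m}_{2})}F_{n+m}(x,y,z;q)$, so the double series on the left of \eqref{dthm3_1} turns into exactly the left-hand side of \eqref{cr}. The convergence hypothesis $|\omega x|<1$ of Theorem \ref{thm3d}, together with the argument $xt$ of the internal ${}_{2}\Phi_1$, becomes $\max\{|\omega y|,|yt|\}<1$ after the swap $x\leftrightarrow y$, matching the stated region.

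The substantive part is the right-hand side. After the substitutions the prefactor becomes $(x\omega;q)_\infty/(y\omega;q)_\infty$, the exponent reduces to $q^{k\alpha}$, and the limit $\alpha\to\infty$ must be taken through the combination $q^{k\alpha}(-q^{-\alpha};q)_k=\prod_{i=0}^{k-1}(q^{\alpha}+q^{i})\to q^{({}^k_2)}$, while the choice $a=-q$ makes $(a;q)_k=(-q;q)_k$ cancel the factor $(-q;q)_k$ in the denominator; the inner ${}_{2}\Phi_1$ turns into ${}_{2}\Phi_1[x/y,0;x\omega q^j;q;yt]$. At this stage the right-hand side reads
\[
\frac{(x\omega;q)_\infty}{(y\omega;q)_\infty}\sum_{k=0}^\infty\sum_{j=0}^k q^{({}^k_2)}\frac{(-\omega z)^k}{(q;q)_{k-j}}\,\frac{(y\omega;q)_j\,(t/\omega)^j}{(x\omega,q;q)_j}\,{}_{2}\Phi_1\left[\begin{array}{r}x/y,0;\\\\x\omega q^j;\end{array}q;yt\right].
\]

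Next I would interchange the order of summation and write $k=j+l$ with $l\geq 0$, so that all the $j$-dependent data (the factor $(y\omega;q)_j(t/\omega)^j/(x\omega,q;q)_j$ and the ${}_{2}\Phi_1$) pull outside an inner sum over $l$. Splitting the exponent via $({}^{j+l}_{2})=({}^j_2)+jl+({}^l_2)$, the inner sum collapses by the classical Euler identity $\sum_{l=0}^\infty q^{({}^l_2)}w^l/(q;q)_l=(-w;q)_\infty$ taken at $w=-\omega z q^{j}$, giving $(-1)^j q^{({}^j_2)}(\omega z)^j(z\omega q^j;q)_\infty$. The decisive simplification is then $(z\omega q^{j};q)_\infty=(z\omega;q)_\infty/(z\omega;q)_j$, which extracts the global factor $(z\omega;q)_\infty$ and inserts $(z\omega;q)_j$ into the denominator; combining $(t/\omega)^j(\omega z)^j=(zt)^j$ and absorbing $(z\omega;q)_\infty$ into the prefactor reproduces the right-hand side of \eqref{cr} verbatim.

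I expect the main obstacle to be the bookkeeping in this interchange-and-collapse step: one must check that the $j$-dependent quantities that survive are precisely the argument $x\omega q^{j}$ of the ${}_{2}\Phi_1$ and the shifted factorial $(z\omega q^{j};q)_\infty$, and that no stray $q^{\pm jl}$ factors are dropped when the binomial exponent is split. Once the Euler summation is applied correctly, everything that remains is routine recombination of $q$-shifted factorials, and the limit $\alpha\to\infty$ is justified termwise within the region $\max\{|\omega y|,|yt|\}<1$.
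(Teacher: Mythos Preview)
Your proposal is correct and follows exactly the route the paper indicates: the corollary is stated there simply as the specialization $\alpha\to\infty$, $\tilde r=\tilde s=0$, $x\leftrightarrow y$, $z\mapsto-z$, $a=-q$ of Theorem~\ref{thm3d}, with no further argument given. You have in fact supplied the nontrivial step the paper leaves to the reader, namely collapsing the double $(k,j)$-sum on the right of \eqref{dthm3_1} to the single $j$-sum in \eqref{cr} via the substitution $k=j+l$ and Euler's identity $\sum_{l\ge0}q^{\binom{l}{2}}w^{l}/(q;q)_{l}=(-w;q)_\infty$; your bookkeeping (the split $\binom{j+l}{2}=\binom{j}{2}+jl+\binom{l}{2}$, the limit $q^{k\alpha}(-q^{-\alpha};q)_k\to q^{\binom{k}{2}}$, the cancellation of $(-q;q)_k$, and the rewriting $(z\omega q^{j};q)_\infty=(z\omega;q)_\infty/(z\omega;q)_j$) is accurate.
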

\section{Mixed generating function for generalized $q$-polynomials}
\label{section5}

The Hahn polynomials \cite{Hahn049,Hahn49}
 (or Al-Salam and Carlitz polynomials \cite{AlSalam,Cao2012A}) are defined as
\be
\label{ALSALAM}
\phi_n^{(\sigma)}(x|q)=\sum_{k=0}^n
\begin{bmatrix}
                        n \\
                      k \\
    \end{bmatrix}_q
 (\sigma;q)_k\, x^k.
\ee
While Cao   \cite{Cao2012A} used the technique of exponential operator decomposition, Srivastava and Agarwal \cite{SrivastavaAgarwal} adopted the method of transformation theory to deduce the following results  (For more information, please refer to \cite{Hahn049,Hahn49,AlSalam,SrivastavaAgarwal,Cao2009A,Cao2010A,Cao2012A}:
  \begin{lemma}[{\cite[Eq. (3.20)]{SrivastavaAgarwal}}]
  For $\max\{|t|, |xt|\}<1$, we have: 
\label{LEMMA41}
\be
\label{21sums}
\sum_{n=0}^\infty \phi_n^{(\sigma)}(x|q) (\lambda;q)_n\frac{  t^n}{(q;q)_n} \\
= \frac{(\lambda t; q)_\infty }{(t;q)_\infty}   {}_2\Phi_1\left[
\begin{array}{rr} \lambda, \sigma;\\\\
 \lambda  t; \end{array}\,q; xt
\right].
\ee

 \end{lemma}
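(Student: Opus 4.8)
The plan is to prove \eqref{21sums} by a direct double-sum manipulation that reduces the inner sum to the $q$-binomial theorem, rather than the transformation-theoretic route of Srivastava and Agarwal \cite{SrivastavaAgarwal}. First I would insert the defining expansion \eqref{ALSALAM} of the Hahn polynomial into the left-hand side and write the Gaussian binomial explicitly as $\begin{bmatrix} n \\ k \end{bmatrix}_q=(q;q)_n/[(q;q)_k(q;q)_{n-k}]$. The factor $(q;q)_n$ arising from the $q$-binomial coefficient cancels the $(q;q)_n$ in the denominator of $t^n/(q;q)_n$, so the left-hand side collapses to the double series
\[
\sum_{n=0}^\infty\sum_{k=0}^n\frac{(\lambda;q)_n(\sigma;q)_k\,x^kt^n}{(q;q)_k(q;q)_{n-k}}.
\]

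Next I would interchange the order of summation and set $n=m+k$, summing over the new index $m=n-k\ge 0$, which gives
\[
\sum_{k=0}^\infty\sum_{m=0}^\infty\frac{(\lambda;q)_{m+k}(\sigma;q)_k\,x^kt^{m+k}}{(q;q)_k(q;q)_m}.
\]
Applying the factorization $(\lambda;q)_{m+k}=(\lambda;q)_k(\lambda q^k;q)_m$ lets me pull every $k$-dependent quantity out of the inner sum, leaving
\[
\sum_{k=0}^\infty\frac{(\lambda;q)_k(\sigma;q)_k(xt)^k}{(q;q)_k}\sum_{m=0}^\infty\frac{(\lambda q^k;q)_m\,t^m}{(q;q)_m}.
\]
The inner sum is now exactly the $q$-binomial theorem $\sum_{m\ge0}(a;q)_mz^m/(q;q)_m=(az;q)_\infty/(z;q)_\infty$ (see \cite{GasparRahman}) with $a=\lambda q^k$ and $z=t$, so it equals $(\lambda q^kt;q)_\infty/(t;q)_\infty$. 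Using $(\lambda q^kt;q)_\infty=(\lambda t;q)_\infty/(\lambda t;q)_k$ to split off the $k$-dependence, the common prefactor $(\lambda t;q)_\infty/(t;q)_\infty$ comes out of the sum, and the remaining series $\sum_{k\ge0}(\lambda,\sigma;q)_k(xt)^k/[(\lambda t;q)_k(q;q)_k]$ is precisely the ${}_2\Phi_1$ on the right-hand side in the paper's convention. This yields \eqref{21sums}.

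The only delicate point, and the step I expect to require the most care, is justifying the rearrangement of the double series; the rest is purely formal once the regrouping is in place. Under the stated hypotheses $|t|<1$ and $|xt|<1$, the inner $q$-binomial series converges absolutely (governed by $|t|<1$) and the outer ${}_2\Phi_1$ converges absolutely (governed by $|xt|<1$), so Fubini/Tonelli applies and every interchange above is legitimate. I therefore expect no genuine obstacle beyond this convergence bookkeeping: the identity is essentially the $q$-binomial theorem applied after the correct regrouping of indices.
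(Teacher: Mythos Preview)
Your argument is correct. The double-sum rearrangement followed by the $q$-binomial theorem is valid exactly as you describe, and the convergence conditions $|t|<1$ and $|xt|<1$ are precisely what is needed to justify the interchange and the evaluation of the inner and outer series, respectively.

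As for comparison: the paper does not actually give its own proof of this lemma. It is quoted as \cite[Eq.~(3.20)]{SrivastavaAgarwal} and merely remarked that Srivastava and Agarwal obtained it via transformation theory, while Cao \cite{Cao2012A} recovered it through exponential operator decomposition. Your route is a third, more elementary one: it bypasses both the operator machinery and the ${}_2\Phi_1$ transformation formulas, relying only on the $q$-binomial theorem after an index shift. What you gain is transparency and self-containment; what the operator and transformation approaches buy is a uniform framework that extends more readily to the multi-parameter generalizations treated elsewhere in the paper.
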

In Theorem \ref{TA1} below, we give  and prove a mixed generating function for generalized $q$-polynomials
by applying the $q$-difference equations.
  \begin{thm}
\label{TA1}
  Let $\tilde{r},\,\tilde{s} \in\mathbb{Z}$. For $|ut|<1,$  we have: 
\begin{align}\label{prof}
 &\sum_{n=0}^\infty \phi_n^{(\sigma)}(x|q) \tilde{L}_n^{(\tilde{r},\tilde{s})} (\alpha,u,v,z,a)  \frac{ \,t^n}{(q;q)_n}\cr
  &\qquad\qquad =  \frac{(v t; q)_\infty   }{ (ut;q)_\infty} \sum_{m=0}^\infty \sum_{k=0}^\infty\sum_{j=0}^m\frac{(\sigma;q)_m\,x^m}{(q;q)_m}\frac{(q^{-m},ut;q)_j q^j}{(vt,q;q)_j}
  \frac{(-q^{-\alpha};q)_k(a;q)_k\,(tzq^j)^k}{(q^2;q^2)_k }
q^{k\alpha+\tilde{r}({}^k_2)-\tilde{s}({}^{k+1}_{\;\,\;2})}.
\end{align}
\end{thm}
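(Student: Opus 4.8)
The plan is to run the $q$-difference equation machinery of Theorem \ref{thm1}. I would write $f(\alpha,u,v,a,z,\tilde r,\tilde s)$ for the right-hand side of \eqref{prof}, treating $x,\sigma,t$ as parameters, and aim to show that $f$ solves \eqref{thm1_1} (with $u,v$ in the roles of $x,y$) while identifying its value at $z=0$; Theorem \ref{thm1} then forces an expansion in the $\tilde L_n^{(\tilde r,\tilde s)}$ whose coefficients can be read off. First I would dispose of convergence exactly as in the proof of Theorem \ref{thmc3}: for $|ut|<1$ the Weierstrass $M$-test gives absolute convergence of both sides and justifies rearranging the triple series. Next I would compute $A_0:=f|_{z=0}$, where only the $k=0$ term survives. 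The inner $j$-sum is then a ${}_2\Phi_1$ of argument $q$ with upper parameters $q^{-m},ut$ and lower parameter $vt$, which the $q$-Chu--Vandermonde summation collapses to $(v/u;q)_m(ut)^m/(vt;q)_m$. Substituting this back and applying Lemma \ref{LEMMA41} with $\lambda=v/u$ and $t$ replaced by $ut$ (using $p_n(u,v)=(v/u;q)_n u^n$) yields
\begin{align*}
A_0=\frac{(vt;q)_\infty}{(ut;q)_\infty}\,{}_{2}\Phi_{1}\!\left(v/u,\sigma;\,vt;\,q,\,xut\right)=\sum_{n=0}^\infty \phi_n^{(\sigma)}(x|q)\,p_n(u,v)\,\frac{t^n}{(q;q)_n}.
\end{align*}

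The crux is to verify that the coefficient of $z^k$ in $f$ is
\begin{align*}
[z^k]f=q^{k\alpha+\tilde r({}^k_2)-\tilde s({}^{k+1}_{\;\,\;2})}\,\frac{(-q^{-\alpha},a;q)_k}{(q^2;q^2)_k}\,D_{uv}^k\{A_0\},
\end{align*}
which is exactly the coefficient pattern that the proof of Theorem \ref{thm1} shows to characterize solutions of \eqref{thm1_1}. Since $D_{uv}^k\{p_n(u,v)\}=\frac{(q;q)_n}{(q;q)_{n-k}}p_{n-k}(u,v)$, the already computed $A_0$ gives $D_{uv}^k\{A_0\}=t^k\sum_{n\ge0}\phi_{n+k}^{(\sigma)}(x|q)\,p_n(u,v)\,t^n/(q;q)_n$, so the required statement is the shifted generating function
\begin{align*}
\frac{(vt;q)_\infty}{(ut;q)_\infty}\sum_{m=0}^\infty\frac{(\sigma;q)_m x^m}{(q;q)_m}\sum_{j=0}^m\frac{(q^{-m},ut;q)_j\,q^{j(k+1)}}{(vt,q;q)_j}=\sum_{n=0}^\infty \phi_{n+k}^{(\sigma)}(x|q)\,p_n(u,v)\,\frac{t^n}{(q;q)_n}.
\end{align*}
This is where the main difficulty sits. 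I would establish it by interchanging the $m$- and $j$-summations, rewriting $(q^{-m};q)_j=\frac{(q;q)_m}{(q;q)_{m-j}}(-1)^j q^{\binom{j}{2}-mj}$ and invoking \eqref{usu}, summing the freed inner series by the $q$-binomial theorem, and reorganizing; the extra power $q^{jk}$ carried by $(tzq^j)^k$ is precisely what promotes $\phi_n^{(\sigma)}$ to $\phi_{n+k}^{(\sigma)}$ (the base case $k=0$ of this identity is just the evaluation of $A_0$ above, giving a consistency check).

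With the coefficient identity in hand, $f$ satisfies \eqref{thm1_1}, so Theorem \ref{thm1} yields $f=\sum_{n\ge0}\mu_n\,\tilde L_n^{(\tilde r,\tilde s)}(\alpha,u,v,z,a)$; matching this against $A_0=\sum_n\mu_n p_n(u,v)$ forces $\mu_n=\phi_n^{(\sigma)}(x|q)\,t^n/(q;q)_n$, which is exactly the left-hand side of \eqref{prof}. The only bookkeeping beyond the shifted generating function is justifying the interchange of the triple sum and the termwise action of $D_{uv}$, both harmless for $|ut|<1$. Thus the genuine obstacle is the middle step: establishing the shifted generating function for $\phi_{n+k}^{(\sigma)}$, equivalently recognizing the inner double sum attached to $z^k$ in \eqref{prof} as $D_{uv}^k\{A_0\}$.
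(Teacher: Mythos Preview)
Your overall strategy matches the paper's exactly: write the right-hand side as $f(\alpha,u,v,a,z,\tilde r,\tilde s)$, compute $A_0=f|_{z=0}$ via $q$-Chu--Vandermonde, verify the structural relation $[z^k]f=C_k\,D_{uv}^k\{A_0\}$ (with $C_k=q^{k\alpha+\tilde r\binom{k}{2}-\tilde s\binom{k+1}{2}}(-q^{-\alpha},a;q)_k/(q^2;q^2)_k$), and invoke Theorem~\ref{thm1}. The difference is only in how you verify that structural relation, and here you have made the step harder than it needs to be.

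You compute $D_{uv}^k\{A_0\}$ from the \emph{post}-Chu--Vandermonde series $A_0=\sum_n\phi_n^{(\sigma)}(x|q)\,p_n(u,v)\,t^n/(q;q)_n$, which leads you to the ``shifted generating function'' for $\phi_{n+k}^{(\sigma)}$ that you flag as the main difficulty. The paper instead applies $D_{uv}^k$ to the \emph{pre}-Chu--Vandermonde form of $A_0$, namely
\[
A_0=\sum_{m\ge0}\frac{(\sigma;q)_m x^m}{(q;q)_m}\sum_{j=0}^m\frac{(q^{-m};q)_j\,q^j}{(q;q)_j}\,\frac{(vtq^j;q)_\infty}{(utq^j;q)_\infty},
\]
and uses the eigenfunction property $D_{uv}^k\bigl\{(vtq^j;q)_\infty/(utq^j;q)_\infty\bigr\}=(tq^j)^k\,(vtq^j;q)_\infty/(utq^j;q)_\infty$ (already used in the proof of Theorem~\ref{thmc3}). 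This produces exactly the factor $(tq^j)^k$ appearing in $[z^k]f$, so the identity $[z^k]f=C_k\,D_{uv}^k\{A_0\}$ is immediate---no shifted generating function, no series rearrangement with $(xq^{-j})^n$, no separate convergence worry. Your detour is not wrong in spirit (the shifted identity is true and follows \emph{a posteriori} from the argument just described), but the direct manipulation you sketch introduces terms like $\sum_n(\sigma q^j;q)_n(xq^{-j})^n/(q;q)_n$ whose convergence for large $j$ requires extra justification, and the whole step is avoidable.
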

 In the proof of Theorem \ref{TA1},  the following $q$-Chu-Vandermonde  formula will be needed:
 \begin{lemma}[{$q$-Chu-Vandermonde   \cite[  Eq. (II.6)]{GasparRahman}}]
 \be
 \label{male}
   {}_2\Phi_1\left[
\begin{array}{rr}q^{-n},a;\\\\
 c; \end{array}\,q;  q
\right] =\frac{(c/a;q)_n}{(c;q)_n}a^n.
\ee
 \end{lemma}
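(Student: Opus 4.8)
The plan is to prove the closed form in \eqref{male} by induction on $n$, showing that both sides obey the same first-order recurrence in $n$ and agree at $n=0$. Write $S_n:={}_2\Phi_1(q^{-n},a;c;q,q)=\sum_{k=0}^{n}\frac{(q^{-n};q)_k(a;q)_k}{(c;q)_k(q;q)_k}q^k$, where the sum terminates because $(q^{-n};q)_k=0$ for $k>n$, and let $G_n:=\frac{(c/a;q)_n}{(c;q)_n}a^n$ be the conjectured value. At $n=0$ both equal $1$. From the closed form one reads off the ratio $G_n/G_{n-1}=\frac{(1-(c/a)q^{n-1})\,a}{1-cq^{n-1}}=\frac{a-cq^{n-1}}{1-cq^{n-1}}$, so it suffices to establish the contiguous relation $(1-cq^{n-1})\,S_n=(a-cq^{n-1})\,S_{n-1}$; the induction then closes at once.

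To produce this recurrence I would relate the summand at level $n$ to that at level $n-1$ via the exact decomposition $(q^{-n};q)_k=(q^{1-n};q)_k-q^{-n}(1-q^k)(q^{1-n};q)_{k-1}$, which is just the elementary splitting $1-q^{k-n}=(1-q^{-n})+q^{-n}(1-q^k)$ rearranged. Substituting this into $S_n$, the first piece reproduces $S_{n-1}$ (the extra $k=n$ term drops out since $(q^{1-n};q)_n=0$), while in the second piece the factor $(1-q^k)$ cancels against $(q;q)_k=(1-q^k)(q;q)_{k-1}$, allowing a shift $k\mapsto k+1$ that again yields a multiple of a level-$(n-1)$ sum. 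Collecting the two contributions and reconciling the $(c;q)_k$ and $(q;q)_k$ denominators gives precisely $(1-cq^{n-1})S_n-(a-cq^{n-1})S_{n-1}=0$, as required. The reversal identity following from \eqref{qb}, namely $(q^{-n};q)_k=\frac{(q;q)_n}{(q;q)_{n-k}}(-1)^k q^{\binom{k}{2}-nk}$ (equivalently \eqref{usu}), is the tool for handling the $q$-powers that appear during these manipulations.

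An alternative route, which I would keep in reserve, starts from the $q$-binomial theorem $\sum_{m\ge0}\frac{(\beta;q)_m}{(q;q)_m}z^m=\frac{(\beta z;q)_\infty}{(z;q)_\infty}$ and equates the coefficient of $t^n$ on the two sides of the telescoping factorization $\frac{(ct;q)_\infty}{(at;q)_\infty}\cdot\frac{(at;q)_\infty}{(t;q)_\infty}=\frac{(ct;q)_\infty}{(t;q)_\infty}$, which produces a Cauchy-product convolution equivalent to the $q$-Vandermonde sum. The delicate point is common to both routes: passing from a convolution form (carrying factors $(c;q)_{n-k}$ in the numerator) to the stated terminating ${}_2\Phi_1$ (carrying $(c;q)_k$ in the denominator) forces a reversal of the order of summation in the finite series and a simplification of the reversed $q$-shifted factorials through \eqref{usu}. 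I expect this reversal bookkeeping — keeping the signs and the powers coming from $\binom{k}{2}-nk$ straight — to be the main obstacle; once it is carried out, matching the result against $G_n$ is routine and the $q$-Chu-Vandermonde formula \eqref{male} follows.
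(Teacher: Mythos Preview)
The paper does not supply a proof of this lemma at all: it is quoted verbatim as Eq.~(II.6) from Gasper and Rahman and used as a black box in the proof of Theorem~\ref{TA1}. So there is no ``paper's own proof'' to compare against; any correct argument you give goes beyond what the authors do.

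Both of the strategies you outline are standard and can be made to work. Your alternative route via the $q$-binomial theorem is in fact the textbook proof: equating coefficients of $t^{n}$ in
\[
\frac{(ct;q)_\infty}{(at;q)_\infty}\cdot\frac{(at;q)_\infty}{(t;q)_\infty}=\frac{(ct;q)_\infty}{(t;q)_\infty}
\]
yields the convolution $\sum_{j=0}^{n}\frac{(c/a;q)_{j}(a;q)_{n-j}}{(q;q)_{j}(q;q)_{n-j}}\,a^{j}=\frac{(c;q)_{n}}{(q;q)_{n}}$, and reversing the summation with \eqref{usu} gives \eqref{male} directly. That approach is short and self-contained; I would promote it to the main argument rather than keeping it ``in reserve.''

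By contrast, your primary induction sketch has a soft spot. After the (correct) splitting $(q^{-n};q)_k=(q^{1-n};q)_k-q^{-n}(1-q^{k})(q^{1-n};q)_{k-1}$ and the shift $k\mapsto k+1$, the second piece becomes
\[
-\,q^{1-n}\,\frac{1-a}{1-c}\,{}_{2}\Phi_{1}\!\left[\begin{array}{c}q^{1-n},\,aq;\\ cq;\end{array}q;\,q\right],
\]
i.e.\ a level-$(n-1)$ sum with \emph{shifted} numerator and denominator parameters $aq,\,cq$, not $S_{n-1}$ itself. Consequently the identity $(1-cq^{n-1})S_{n}=(a-cq^{n-1})S_{n-1}$ does not fall out of ``collecting the two contributions'' alone; you still need a second contiguous relation (or an appeal to the inductive hypothesis at the shifted parameters) to eliminate the auxiliary ${}_{2}\Phi_{1}$. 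This is fixable, but it is precisely the step you wave past. If you want to keep the inductive proof, either carry out that elimination explicitly or run the induction simultaneously over the family with parameters $(aq^{m},cq^{m})$.
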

  \begin{proof}[Proof of   Theorem \ref{TA1}]
Eq. (\ref{prof}) can equivalently be written  as follows:
\begin{align}\label{rhs}
 &  \sum_{n=0}^\infty \phi_n^{(\sigma)}(x|q) \tilde{L}_n^{(\tilde{r},\tilde{s})} (\alpha,u,v,z,a)  \frac{ \,t^n}{(q;q)_n} \cr
& \qquad\qquad = \sum_{m=0}^\infty\frac{(\sigma;q)_m\,x^m}{(q;q)_m} \sum_{j=0}^m\frac{(q^{-m};q)_j q^j}{(q;q)_j} \sum_{k=0}^\infty q^{k\alpha+\tilde{r}({}^k_2)-\tilde{s}({}^{k+1}_{\;\,\;2})}\frac{(-q^{-\alpha};q)_k(a;q)_k\,z^k}{(q^2;q^2)_k }D_{uv}^k\left\{ \frac{(v tq^j; q)_\infty   }{ (ut q^j;q)_\infty}   \right\}.
\end{align}
If we use $g(\alpha,u,v,a,z,\tilde{r},\tilde{s})$ to denote the right-hand side of (\ref{rhs}),
it is easy to see that $g(\alpha,u,v,a,z,\tilde{r},\tilde{s})$ satisfies (\ref{thm1_1}).
Letting $\displaystyle
g(\alpha,u,v,a,z,\tilde{r},\tilde{s})=\sum_{k=0}^\infty B_k(\alpha,u,v,a,\tilde{r},\tilde{s})z^k$ and
\begin{align}
B_k(\alpha,u,v,a,\tilde{r},\tilde{s})= q^{k\alpha+\tilde{r}({}^k_2)-\tilde{s}({}^{k+1}_{\;\,\;2})}\frac{(-q^{-\alpha};q)_k(a;q)_k }{(q^2;q^2)_k }  D_{uv}^k\left\{ \sum_{m=0}^\infty\frac{(\sigma;q)_m\,x^m}{(q;q)_m} \sum_{j=0}^m\frac{(q^{-m};q)_j q^j}{(q;q)_j} \frac{(v tq^j; q)_\infty   }{ (ut q^j;q)_\infty}   \right\}, \label{qqt}
\end{align}
we obtain
\begin{align}
 B_0(\alpha,u,v,a,\tilde{r},\tilde{s})
&= \frac{(v t; q)_\infty   }{ (ut ;q)_\infty}    \sum_{m=0}^\infty\frac{(\sigma;q)_m\,x^m}{(q;q)_m} \sum_{j=0}^m\frac{(q^{-m},ut;q)_j q^j}{(vt,q;q)_j}\cr
&= \frac{(v t; q)_\infty   }{ (ut ;q)_\infty}    \sum_{m=0}^\infty\frac{(\sigma;q)_m\,x^m}{(q;q)_m} {}_2\Phi_1\left[
\begin{array}{rr}q^{-m},ut;\\\\
 vt; \end{array}\,q; q
\right]\quad (\mbox{by } (\ref{male}))\cr
&= \frac{(v t; q)_\infty   }{ (ut ;q)_\infty}    \sum_{m=0}^\infty\frac{(\sigma;q)_m\,x^m}{(q;q)_m}.\frac{(v/u;q)_m (ut)^m}{(vt;q)_m}\cr
&=\frac{(v t; q)_\infty }{(ut;q)_\infty}   {}_2\Phi_1\left[
\begin{array}{rr} v/u,  \sigma;\\\\
 v  t; \end{array}\,q; uxt
\right]\cr
&=\sum_{n=0}^\infty   \phi_n^{(\sigma)}(x|q) \frac{ p_{n }(u,v)\, t^n }{(q;q)_{n}},
 \label{dqt}
\end{align}
and $g(\alpha,u,v,a,0,\tilde{r},\tilde{s})= B_0(\alpha,u,v,a,\tilde{r},\tilde{s})$. Taking equation (\ref{dqt}) into (\ref{qqt}), we get:
 \begin{align}
B_k(\alpha,u,v,a,\tilde{r},\tilde{s})=q^{k\alpha+\tilde{r}({}^k_2)-\tilde{s}({}^{k+1}_{\;\,\;2})}\frac{(-q^{-\alpha},a;q)_k}{(q^2;q^2)_k }\cdot D_{uv}^k\{ B_0(\alpha,u,v,a,\tilde{r},\tilde{s})\}.
\end{align}
Through above identities, $g(\alpha,u,v,a,z,\tilde{r},\tilde{s})$ satisfies the equation (\ref{thm1_1}). So, we have
\begin{align*}
g(\alpha,u,v,a,z,\tilde{r},\tilde{s})&=
  \sum_{k=0}^n\frac{(-q^{-\alpha},a;q)_k }{(q^2;q^2)_k } q^{k\alpha+\tilde{r}({}^k_2)-\tilde{s}({}^{k+1}_{\;\,\;2})} z^k D_{uv}^k\left\{\sum_{n=0}^\infty   \phi_n^{(\sigma)}(x|q) \frac{ p_{n }(u,v)\, t^n }{(q;q)_{n}}\right\}
\cr&=  \sum_{n=0}^\infty  \sum_{k=0}^n\frac{(-q^{-\alpha},a;q)_k }{(q^2;q^2)_k } q^{k\alpha+\tilde{r}({}^k_2)-\tilde{s}({}^{k+1}_{\;\,\;2})} z^k  \phi_n^{(\sigma)}(x|q)\frac{p_{n-k}(u,v)\;t^n}{(q;q)_{n-k}}\cr
&=\sum_{n=0}^\infty  \phi_n^{(\sigma)}(x|q)\frac{t^n}{(q;q)_n}\sum_{k=0}^n\begin{bmatrix}
                        n \\
                       k \\
    \end{bmatrix}_q\begin{bmatrix}
                        \alpha \\
                       k \\
    \end{bmatrix}_{-q}q^{\tau(\tilde{r},\tilde{s})+({}^k_2)} (a;q)_kp_{n-k}(u,v)z^k\cr
&=\sum_{n=0}^\infty  \phi_n^{(\sigma)}(x|q) \tilde{L}_n^{(\tilde{r},\tilde{s})} (\alpha,u,v,z,a)\frac{t^n}{(q;q)_n},
\end{align*}
which  is the left hand side of (\ref{prof}).  The proof is complete.
 \end{proof}
As a special case of Theorem \ref{TA1}, if   we take $\alpha\to \infty,\;a=-q,\,u\to v,\,v\to u,\, z=-z$ and $\tilde{r}=\tilde{s}=0$, we have the following corollary:
  \begin{corollary}[Mixed generating function for trivariate $q$-polynomials $F_n(x,y,z;q)$]
\label{Tc2A1}
 For $\max\{|vt|,|x|\}<1,$ we have:
\begin{align}\label{c2prof}
  \sum_{n=0}^\infty \phi_n^{(\sigma)}(x|q) F_n(u,v,z;q)  \frac{ (-1)^n q^{({}^n_2)}\,t^n}{(q;q)_n}  =  \frac{(\sigma x,ut,zt;q)_\infty  }{ (vt,x;q)_\infty}  {}_4\Phi_3\left[
\begin{array}{rr} \sigma, vt, 0,0;\\\\
 ut,zt, q/x; \end{array}\,q; q
\right].
\end{align}
\end{corollary}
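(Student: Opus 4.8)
The plan is to deduce Corollary~\ref{Tc2A1} directly from the master identity \eqref{prof} of Theorem~\ref{TA1} by the stated specialization $\alpha\to\infty$, $a=-q$, $z\to-z$, $\tilde r=\tilde s=0$ together with the interchange $u\leftrightarrow v$, and then to evaluate the surviving multiple sum in closed form. On the left of \eqref{prof} these substitutions turn $\tilde L_n^{(\tilde r,\tilde s)}(\alpha,u,v,z,a)$ into $\tilde L_n^{(0,0)}(\infty,v,u,-z,-q)$, which by Remark~\ref{remark1}(2) equals $(-1)^nq^{\binom n2}F_n(u,v,z;q)$; this is exactly the left-hand side of \eqref{c2prof}. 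So the whole content is to show that the right-hand side of \eqref{prof} specializes to the ${}_4\Phi_3$ of \eqref{c2prof}.

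First I would collapse the inner $k$-sum. With $\tilde r=\tilde s=0$ the power $q^{\tilde r\binom k2-\tilde s\binom{k+1}2}$ disappears, while $q^{k\alpha}(-q^{-\alpha};q)_k=\prod_{i=0}^{k-1}(q^\alpha+q^i)\to q^{\binom k2}$ as $\alpha\to\infty$; taking $a=-q$ so that $(a;q)_k=(-q;q)_k$, and $z\to-z$ so that $(tzq^j)^k$ acquires a factor $(-1)^k$, the factorization $(q^2;q^2)_k=(q;q)_k(-q;q)_k$ reduces the $k$-sum to $\sum_{k\ge0}(-1)^kq^{\binom k2}(tzq^j)^k/(q;q)_k=(tzq^j;q)_\infty$ by Euler's identity $\sum_{k\ge0}(-1)^kq^{\binom k2}w^k/(q;q)_k=(w;q)_\infty$. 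After the $u\leftrightarrow v$ swap the right-hand side of \eqref{prof} therefore becomes
\[ \frac{(ut;q)_\infty}{(vt;q)_\infty}\sum_{m=0}^\infty\frac{(\sigma;q)_m\,x^m}{(q;q)_m}\sum_{j=0}^m\frac{(q^{-m},vt;q)_j\,q^j}{(ut,q;q)_j}\,(tzq^j;q)_\infty. \]

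The crux is to evaluate this double sum. I would write $(tzq^j;q)_\infty=(zt;q)_\infty/(zt;q)_j$, pull the factor $(zt;q)_\infty$ out front, and interchange the order of summation, so that the inner sum over $m$ becomes $\sum_{m\ge j}(\sigma;q)_m x^m(q^{-m};q)_j/(q;q)_m$. Using $(q^{-m};q)_j=(-1)^jq^{\binom j2-jm}(q;q)_m/(q;q)_{m-j}$ and setting $m=j+l$, this inner sum turns into the $q$-binomial series $(-1)^jq^{\binom j2}(\sigma;q)_j x^j q^{-j^2}\sum_{l\ge0}(\sigma q^j;q)_l(xq^{-j})^l/(q;q)_l$; summing it by the $q$-binomial theorem gives $(\sigma x;q)_\infty/(xq^{-j};q)_\infty$, and then the reversal identity \eqref{usu} (with $a=x$, $n=j$) converts $(xq^{-j};q)_\infty$ into a $(q/x;q)_j$ denominator, the surplus powers of $q$ and the signs cancelling to leave the whole $m$-sum equal to $(\sigma;q)_j(\sigma x;q)_\infty/[(q/x;q)_j(x;q)_\infty]$. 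What remains is the single sum $\frac{(\sigma x;q)_\infty}{(x;q)_\infty}\sum_{j\ge0}\frac{(\sigma,vt;q)_j}{(ut,zt,q/x;q)_j}\frac{q^j}{(q;q)_j}=\frac{(\sigma x;q)_\infty}{(x;q)_\infty}\,{}_4\Phi_3[\sigma,vt,0,0;ut,zt,q/x;q,q]$, the two zero numerator parameters being inserted precisely so that $1+s-r=0$ and the defining $[(-1)^jq^{\binom j2}]^{1+s-r}$ factor is absent. Multiplying by the surviving prefactor $(ut,zt;q)_\infty/(vt;q)_\infty$ then reproduces the right-hand side of \eqref{c2prof}.

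The main obstacle I anticipate is this last evaluation. The interchange of summation is not absolutely convergent—the emergent series $\sum_{l\ge0}(\sigma q^j;q)_l(xq^{-j})^l/(q;q)_l$ diverges once $|xq^{-j}|\ge1$, i.e.\ for all large $j$—so the $q$-binomial summation must be read as an analytic continuation in $x$, valid on the stated domain $\max\{|vt|,|x|\}<1$. The delicate bookkeeping is to track the powers of $q$ produced by $(q^{-m};q)_j$ and by \eqref{usu} and to verify that they cancel exactly, leaving a clean argument-$q$ series; everything else is a routine specialization of Theorem~\ref{TA1}.
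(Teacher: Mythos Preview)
Your proposal is correct and follows exactly the route the paper indicates: the paper simply asserts that Corollary~\ref{Tc2A1} is the specialization $\alpha\to\infty$, $a=-q$, $u\leftrightarrow v$, $z\mapsto-z$, $\tilde r=\tilde s=0$ of Theorem~\ref{TA1}, but gives no further computation. You supply precisely the missing reduction of the specialized right-hand side of \eqref{prof} to the ${}_4\Phi_3$ in \eqref{c2prof}; your bookkeeping of the $q$-powers checks out (the exponent $-\binom{j+1}{2}+j+\binom{j}{2}$ indeed vanishes), and your observation that the intermediate $q$-binomial summation must be read as an analytic continuation in $x$ is a valid caveat that the paper itself does not raise.
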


\section{Concluding remarks}
\label{conclusion}

In  this paper, by using the method of $q$-difference equations, we have systematically deduced several types of generating functions for certain $q$-polynomials. In fact, $q$-Laguerre polynomials and Cigler's polynomials have double $q$-binomial coefficients. Computing their generating functions reveals to be  too difficult. We have noticed that the $q$-difference equations are important tools to calculate  {generating} functions for $q$-polynomials. It is necessary to construct $q$-difference equations satisfied by the general $q$-polynomials with double $q$-binomial coefficients. We have then focused on the expansion of a function of many variables and on some $q$-polynomials. Therefore, we have searched for the generalized $q$-difference equations for general $q$-polynomials with double $q$-binomial coefficients. We believe that this work will be a motivation  to study other $q$-polynomials and their applications.
%
%
\section*{Acknowledgments}
The authors are grateful to the referees and editor for their useful comments and suggestions to improve the paper. 
This work was supported by the Zhejiang Provincial Natural Science Foundation of China (No. LY21A010019). The ICMPA-UNESCO Chair is in partnership
with the Association pour la Promotion Scientifique de l'Afrique
(APSA), France, and Daniel Iagolnitzer Foundation (DIF), France,
supporting the development of mathematical physics in Africa.

\end{CJK*}
\end{document}